\nonstopmode\numberwithin{equation}{section}
\newtheorem{definition}{Definition}[section]
\newtheorem{theorem}{Theorem}[section]
\newtheorem{corollary}{Corollary}[section]
\newtheorem{lemma}{Lemma}[section]
\newtheorem{remark}{Remark}[section]
\begin{document}
\title{ Continued $g$--fractions and geometry of bounded analytic maps}

\author{
Alexei Tsygvintsev
}
\address{
U.M.P.A, Ecole Normale Sup\'{e}rieure de Lyon\\
46, all\'{e}e d'Italie, F69364 Lyon Cedex 07
}
\email{atsygvin@umpa.ens-lyon.fr}

\bigskip
\begin{abstract}
In this work we study  qualitative properties of real analytic bounded maps. The main tool is approximation of real valued functions analytic in rectangular domains of the complex plane
 by   continued $g$--fractions of Wall \cite{W}.  As an application, the Sundman-Poincar\'e method in the  Newtonian three--body problem is revisited and applications to collision detection  problem are considered.
 \end{abstract}.  

\subjclass[2000]{ 37C30, 30E05, 11J70}

\keywords{Continued fractions,  real analytic functions, dynamical systems}
\maketitle
\pagestyle{myheadings}
\markboth{ 
Alexei Tsygvintsev
}{
$g$--fractions and geometry of bounded analytic maps
}

\section{The continued $g$--fraction representation for  real analytic  bounded functions }

By $R_{T,B}\subset \mathbb C $  we denote the open domain  which is the interior of the rectangle with vertices at the points $T +i\, B$, $T-i\, B$, $-T +i\, B$, $-T-i\, B$, $T$, $B>0$ (see Fig. \ref{fig3}).
  Let $\mathbb A_{M,T,B}$ be   the set of all functions  $f(z)$  analytic   in $R_{T,B}$, real valued  for $z\in I_T=(-T,T)$ and bounded in absolute value in  $R_{T,B}$ by $M>0$: 
\begin{equation}
| f(z)| <M,  \quad \forall \, z\in  R_{T,B}\,.   
\end{equation}

\begin{figure}[h]
\includegraphics[scale=0.400,angle=0]{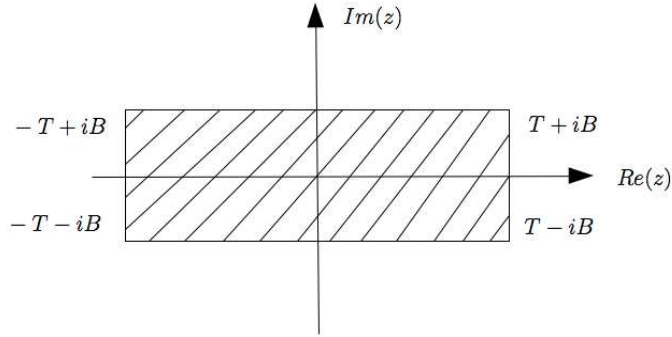} \caption{\label{fig3}  Domain  of analiticity  $R_{T,B}$   of $f(z)\in \mathbb A_{M,T,B}.$}
\end{figure}

Let  $\mathbb H=\mathbb C_-\cup \mathbb C_+ \cup (-1,+\infty)$ where $\mathbb C_+=\{ z\in \mathbb C\, : \, Im(z)>0\}$,  $\mathbb C_-=\{ z\in \mathbb C\, : \, Im(z)<0\}$.   We shall construct a conformal map between the open connected sets $R_{T,B}$ and $\mathbb H$ using the Jacobi elliptic function $\mathrm{sn}(z,k)$ and  the theta functions $\theta_2(z,q)$, $\theta_3(z,q)$ ( see  for  definitions   \cite{A1} ).

 Let 
 \begin{equation} \label{nome}
  q=e^{- \frac{\pi B}{T}}\,,
\end{equation}
be the  nome.

The corresponding real quarter-period   $K>0$ is defined as follows
\begin{equation} \label{f2}
K=\frac{\pi}{2}\theta_3 (0,q)^2\,.
\end{equation}
The elliptic modulus  $k\in (0,1)$  is given  by formula
\begin{equation} \label{kkk}
k=\frac{\theta _2(0,q)^2}{\theta_3 (0,q)^2}\,,
\end{equation}
and  defines the Jacobi elliptic function $\mathrm{sn}(z,k)$.

\begin{lemma}
Let 
\begin{equation}
\Phi(z)= \frac{2 \mathrm{sn}(  \frac{Kz}{T},k)}{1-\mathrm{sn}( \frac{Kz}{T},k)}\,.
\end{equation}
Then $\Phi \, :  R_{T,B} \to \mathbb H$ is conformal and  maps bijectively $(-T,T)$ to $(-1,+\infty)$, $\Phi(0)=0$.
\end{lemma}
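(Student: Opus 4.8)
The plan is to factor $\Phi=\mu\circ\mathrm{sn}(K\,\cdot/T,k)$ and treat the three maps — the real scaling $z\mapsto Kz/T$, the Jacobi function $\mathrm{sn}(\cdot,k)$, and the M\"obius map $\mu(s)=2s/(1-s)$ — one at a time. First I would record the elliptic background: with $k'=\sqrt{1-k^2}$ the complementary modulus and $K'=K(k')$ the complementary quarter-period, the theta-constant formulas \eqref{f2},\eqref{kkk} together with the classical relation $q=e^{-\pi K'/K}$ identify the data $(k,K)$ attached to the nome $q=e^{-\pi B/T}$ as the standard Jacobi data, so that $K'/K=B/T$. Consequently the scaling $z\mapsto Kz/T$ carries $R_{T,B}$ (vertices $\pm T\pm iB$) onto the rectangle $R=(-K,K)\times(-K',K')$ whose sides are exactly the real and imaginary quarter-periods of $\mathrm{sn}$, and it sends $(-T,T)$ to $(-K,K)$ and $0$ to $0$.

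The core of the argument is to prove that $\mathrm{sn}(\cdot,k)$ maps $R$ conformally and bijectively onto the twice-slit plane $\mathbb{C}\setminus\big((-\infty,-1]\cup[1,\infty)\big)$. I would obtain this by a Schwarz-reflection bootstrap from the fundamental quarter-rectangle $Q=(0,K)\times(0,K')$. On $\partial Q$ one computes the boundary values from $\mathrm{sn}(0)=0$, $\mathrm{sn}(K)=1$, $\mathrm{sn}(K+iK')=1/k$, the pole at $iK'$, and the reductions $\mathrm{sn}(K+iv,k)=1/\mathrm{dn}(v,k')$ and $\mathrm{sn}(iv,k)=i\,\mathrm{sn}(v,k')/\mathrm{cn}(v,k')$; these show that $\partial Q$ is carried monotonically, hence homeomorphically, onto the boundary of the first quadrant (out the positive real axis $0\to\infty$, back down the positive imaginary axis $\infty\to 0$). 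By the argument principle $\mathrm{sn}$ then takes each value of the open first quadrant exactly once, so it is a conformal bijection of $Q$ onto that quadrant. Reflecting across the real segment $(0,K)$, where $\mathrm{sn}$ is real, glues in the fourth quadrant and produces a bijection of the right half $(0,K)\times(-K',K')$ onto $\{\mathrm{Re}\,u>0\}\setminus[1,\infty)$; reflecting across the imaginary segment and invoking the oddness $\mathrm{sn}(-w)=-\mathrm{sn}(w)$ adds $\{\mathrm{Re}\,u<0\}\setminus(-\infty,-1]$, and the common image of the shared imaginary edge fills in the axis, yielding the claimed image of all of $R$.

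Along the way I must confirm that $\mathrm{sn}$ is genuinely holomorphic and locally injective on the interior of $R$: its poles sit at the midpoints $\pm iK'$ of the horizontal edges and the zeros of $\mathrm{sn}'=\mathrm{cn}\,\mathrm{dn}$ sit at the midpoints $\pm K$ of the vertical edges, so both lie on $\partial R$, leaving the open rectangle free of poles and critical points. The two slits in the image are precisely the folded images of these boundary ramification points, which explains why no interior value is omitted. Finally I would finish with the M\"obius map: $\mu(s)=2s/(1-s)$ fixes $0$, sends $-1\mapsto-1$, $1\mapsto\infty$, $\infty\mapsto-2$, and, having real coefficients, preserves $\hat{\mathbb{R}}$; a direct check shows it carries the closed arc $\hat{\mathbb{R}}\setminus(-1,1)$ onto $(-\infty,-1]\cup\{\infty\}$, hence the twice-slit plane onto $\mathbb{H}=\mathbb{C}\setminus(-\infty,-1]$, while mapping $(-1,1)$ monotonically (since $\mu'(s)=2/(1-s)^2>0$) onto $(-1,+\infty)$. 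Composing the three steps gives $\Phi:R_{T,B}\to\mathbb{H}$ conformal and bijective with $\Phi((-T,T))=(-1,+\infty)$ and $\Phi(0)=0$.

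I expect the main obstacle to be the second step: rigorously pinning down the boundary correspondence for $\mathrm{sn}$ on $Q$ and concluding injectivity and surjectivity onto the open quadrant rather than merely that the boundary maps correctly. This is where one must handle the ramification at the corners $K$ and $iK'$ and the exact monotonicity of the boundary parametrizations with care. By contrast, the scaling normalization and the M\"obius verification are routine once the elliptic identities of the first step are in hand.
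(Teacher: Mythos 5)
Your proposal is correct and follows exactly the route the paper intends: the paper's proof is a one-line appeal to the classical conformal-mapping properties of $\mathrm{sn}$ in Akhiezer (p.\ 119), and your decomposition into the scaling (using $q=e^{-\pi K'/K}$ to get $K'/K=B/T$), the standard reflection argument showing $\mathrm{sn}$ maps $(-K,K)\times(-K',K')$ onto the twice-slit plane $\mathbb C\setminus((-\infty,-1]\cup[1,\infty))$, and the M\"obius map $s\mapsto 2s/(1-s)$ onto $\mathbb H=\mathbb C\setminus(-\infty,-1]$ is precisely the detailed version of that citation. All the boundary identifications and the final checks $\Phi(0)=0$, $\Phi((-T,T))=(-1,+\infty)$ are accurate.
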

The proof is straightforward  and easily follows from properties of $\mathrm{sn}(z,k)$ described in  \cite{Ach}, p. 119.
Let
\begin{equation} \label{domains}
\mathbb D_M=\{ z\in \mathbb C \,: | z|<M  \}, \quad \mathbb H_+=\{ z\in \mathbb C \,: Re(z)>0  \}\,.
\end{equation}
One verifies that   $m \,: \mathbb D_M \to \mathbb H_+$ defined by 
\begin{equation} \label{mapm}
m(z)=\frac{M+z}{M-z}\,,
\end{equation}
is conformal.

The composition 
\begin{equation} \label{f1}
F=m\circ f\circ \Phi^{-1}, \quad \mathrm{where} \quad f\in \mathbb A_{M,T,B}\,,
\end{equation}
is then a holomorphic function  in $\mathbb H$ such that  $F (\mathbb H)  \subset  \mathbb H_+$ and  $F \, : (-1,+\infty) \to  (0,+\infty)$.

According to Theorem of Wall \cite{W}, p. 279 there exist $\mu_0>0$ and the sequence of real numbers
\begin{equation}
 g_i\in[0,1],   \quad   i\geq 1\,,
 \end{equation}
 such that
\begin{equation}
F(z)=\mu_0\,  \sqrt{1+z} \,  \{g_1,g_2,...| z\}, \quad z\in \mathbb H\,,
\end{equation}
where
\begin{align} \label{f4}
 g(z)=\{ g_1,g_2,... | z  \}=\dfrac{1}{1}
\begin{array}{cc}\\+\end{array}
\dfrac{g_1z}{1}
\begin{array}{cc}\\+\end{array}
\dfrac{(1-g_1)g_2z}{1}
\begin{array}{cc}\\+\end{array}
\dfrac{(1-g_2)g_3z}{1}
\cdots\,,
\end{align}
is a continued $g$--fraction converging uniformly on compact sets  of $\mathbb H$ to an analytic function $g(z)$, $z\in \mathbb H$ (for applications of $g$--fractions see \cite{T2}--\cite{T4}).

\begin{remark}
$g(z)$ is a rational function of $z$  if and only if $g_k \in \{0,1\}$, for some $k\geq1$.  
\end{remark}

As follows from \eqref{f1}:  $f=m^{-1} \circ F \circ \Phi$  and hence the  following representation for $f(z)$  holds  
\begin{equation} \label{r}
f(z)=M\left(      1- \displaystyle \frac{2}{\mu_0  \sqrt{1+\Phi(z)}\,  \{g_1,g_2,... | \Phi(z)\} +1 }       \right), 
  \quad 
z\in R_{T,B}\,.
\end{equation}

To simplify  \eqref{r} we make  a rescaling  and obtain the new function $\phi$ given by 
\begin{equation}  \label{connection}
 \phi(z)=\frac{f({ z/ \alpha})}{M}, \quad \alpha=\frac{K}{T}\,,
\end{equation} 
 which is holomorphic in the  rectangle  $R_{K,\alpha B}$.

We note that  $ | \phi(z) |<1$, $\forall \, z\in R_{K,\alpha B}$.

Formula \eqref{r} then becomes 
\begin{equation} \label{main}
\phi(z)=\left(      1- \displaystyle \frac{2}{\mu_0  \sqrt{1+\eta(z)}\,  \{g_1,g_2,... | \eta(z)\} +1 }       \right), 
  \quad 
z\in R_{K,\alpha B}\,,
\end{equation}
where
\begin{equation} \label{eta}
\eta(z)= \frac{2 \mathrm{sn}(  z,k)}{1-\mathrm{sn}( z,k)}\,.
\end{equation}
 
The  map  $z \mapsto  \eta(z)$ is a bijection  between the real intervals $(-K,K)$ and  $(-1,+\infty)$, $\eta(0)=0$  what will be used later.

We define the truncated  continued $g$--fraction as  the $n$--order approximation of  \eqref{f4}:
 \begin{align} \label{truncated1}
\{g_1,g_2,...,g_n| z\}=\dfrac{1}{1}
\begin{array}{cc}\\+\end{array}
\dfrac{g_1z}{1}
\begin{array}{cc}\\+\end{array}
\dfrac{(1-g_1)g_2z}{1}
\begin{array}{cc}\\   \cdots \end{array}
\dfrac{(1-g_{n-1})g_nz}{1}\,,
\end{align}
which is a rational function of $z$ analytic in $\mathbb H$.

The next theorem  gives the {\it a priori} bounds for the $g$--fraction \eqref{f4}.
\begin{theorem} 
\emph{(\cite{T1})}
\label{T1}\\
\noindent a) Let $k=2n+1$, $n=0,1,...$,  then
\begin{equation}
A_k(z)\leq g(z)\leq B_k(z),\quad -1<z<+\infty \,,
\end{equation}
where
\begin{equation}
A_k(z)=\{g_1,g_2,...,g_k| z\}, \quad  B_k(z)=\{g_1,g_2,...,g_k,1 | z\}\,.
\end{equation}

\noindent b)  Let $k=2n$, $n=1,2,...$, then
\begin{equation}
A_k^+(z)\leq g(z)\leq B_k^+(z),\quad 0\leq z<+\infty,
\end{equation}
\begin{equation}
A_k^-(z)\leq g(z)\leq B_k^-(z),\quad -1<z<0,
\end{equation}
where 
\begin{equation}
A_k^+(z)=\{g_1,g_2,...,g_k,1 | z\}, \quad B_k^+=\{g_1,g_2,...,g_k| z\} \,,
\end{equation}
and $A_k^-=B_k^+$, $B_k^-=A_k ^+$.
\end{theorem}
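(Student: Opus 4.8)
The plan is to establish these bounds by exploiting the standard theory of continued fractions applied to the specific structure of the $g$-fraction \eqref{f4}. First I would set up the notation of convergents. Writing the truncated fraction $\{g_1,\dots,g_n|z\}=\frac{P_n(z)}{Q_n(z)}$, the numerators and denominators satisfy the three-term recurrence $X_n = X_{n-1} + a_n z\, X_{n-2}$, where $a_1=g_1$ and $a_n=(1-g_{n-1})g_n$ for $n\geq 2$. Because each $g_i\in[0,1]$, every partial numerator $a_n\geq 0$, which is exactly the positivity property that drives the classical monotonicity of convergents. The key identity I would invoke is the determinant formula
\begin{equation} \label{det}
P_nQ_{n-1}-P_{n-1}Q_n = (-1)^{n-1}\Bigl(\prod_{i=1}^n a_i\Bigr) z^{n-1},
\end{equation}
obtained by induction from the recurrence. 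This controls the sign of the difference between successive convergents and is the engine behind the nesting of the intervals $[A_k,B_k]$.

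Next I would verify the endpoint structure. The notation $B_k(z)=\{g_1,\dots,g_k,1|z\}$ means we append a final partial quotient with $g_{k+1}=1$; the point is that inserting the tail value $t\in[0,1]$ in place of the last "$1$" in the denominator interpolates continuously between the convergent $\{g_1,\dots,g_k|z\}$ (corresponding to $t=0$) and $\{g_1,\dots,g_k,1|z\}$ (corresponding to $t=1$). Since the true function $g(z)$ equals the fraction evaluated with the genuine analytic tail, and that tail lies in $[0,\infty)$ for $z$ in the relevant range, the value $g(z)$ is squeezed between the two endpoints $t=0$ and $t=1$, provided $g(z)$ itself lies between these. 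Concretely, I would show that the map $t\mapsto \frac{P_k + t\,a_{k+1}z\,P_{k-1}}{Q_k + t\,a_{k+1}z\,Q_{k-1}}$ is monotone in $t$ (its derivative has sign governed by \eqref{det}), so that as $t$ ranges over $[0,1]$ the value sweeps monotonically from $A_k$ to $B_k$, with $g(z)$ caught in between.

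The sign bookkeeping is where the parity split between cases (a) and (b) and the sub-split on $z\lessgtr 0$ enters, and this is the step I expect to be the main obstacle. For odd $k=2n+1$ the factor $(-1)^{k-1}=+1$ and $z^{k-1}=z^{2n}\geq 0$ regardless of the sign of $z$, so \eqref{det} has a fixed sign on all of $(-1,+\infty)$ and one gets a single uniform inequality $A_k\leq g\leq B_k$. For even $k=2n$ the power $z^{k-1}=z^{2n-1}$ changes sign at $z=0$, so the monotonicity in $t$ reverses orientation between $z>0$ and $z<0$; this is precisely why the roles of the endpoints swap, giving $A_k^+=\{\dots,1|z\}$, $B_k^+=\{\dots|z\}$ on $(0,\infty)$ but $A_k^-=B_k^+$, $B_k^-=A_k^+$ on $(-1,0)$. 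I would also need to confirm positivity of the denominators $Q_k(z)>0$ throughout $(-1,+\infty)$ so that no spurious pole inverts the inequalities; this follows inductively from $Q_n=Q_{n-1}+a_nz\,Q_{n-2}$ together with $a_n\geq 0$ and the constraint $a_nz>-1$ built into the construction.

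Finally, I would remark that the convergence of the $g$-fraction to the analytic limit $g(z)$ on $\mathbb H$, already asserted in the text via Wall's theorem, guarantees that $g(z)$ is the genuine limit of the convergents, so the nested intervals $[A_k,B_k]$ (respectively their parity-adjusted analogues) indeed contain $g(z)$ and shrink to it. The statement then follows by collecting the sign analysis across the two parity cases. Since the substantive analytic input is already packaged in Theorem~\ref{T1}'s cited source \cite{T1}, the proof here reduces to this determinant-and-monotonicity argument combined with careful parity bookkeeping.
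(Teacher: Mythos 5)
First, a point of comparison: the paper does not prove Theorem~\ref{T1} at all --- it is imported verbatim from \cite{T1} --- so there is no internal argument to measure yours against; your proposal has to stand on its own. Its classical skeleton (convergents, determinant identity, monotonicity of the M\"obius map in the tail parameter) is the right machinery, but there is a genuine gap at the central step. You write that $g(z)$ ``is squeezed between the two endpoints $t=0$ and $t=1$, provided $g(z)$ itself lies between these'' --- that proviso is exactly what has to be proved. Concretely, writing $g(z)=\frac{P_k+w\,P_{k-1}}{Q_k+w\,Q_{k-1}}$ with $w=\frac{(1-g_k)g_{k+1}z}{1+w_{k+2}}$ the genuine tail, and noting $B_k=\frac{P_k+(1-g_k)z\,P_{k-1}}{Q_k+(1-g_k)z\,Q_{k-1}}$, the squeeze requires showing that $w$ lies between $0$ and $(1-g_k)z$, i.e.\ that $g_{k+1}/(1+w_{k+2})\in[0,1]$, which in turn needs a two-sided bound on the deeper tail $w_{k+2}$. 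Your justification (``the tail lies in $[0,\infty)$ for $z$ in the relevant range'') holds only for $z\ge 0$; for $-1<z<0$ every tail is negative, and one must establish a lower bound of the form $w_m\ge(1-g_{m-1})z$ for all tails simultaneously (by backward induction on finite truncations and a passage to the limit). This value-region lemma for $g$-fraction tails on $(-1,+\infty)$ --- or, equivalently, the integral representation $g(z)=\int_0^1\frac{d\mu(u)}{1+zu}$ with $\int_0^1u\,d\mu=g_1$, from which the case $k=1$ follows at once by Jensen's inequality and the chord estimate for the convex function $u\mapsto(1+zu)^{-1}$ --- is the real analytic content of the theorem, and it is missing from your argument.

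Two secondary problems. The determinant identity is off by one: with $P_n/Q_n=\{g_1,\dots,g_n|z\}$ one has $P_nQ_{n-1}-P_{n-1}Q_n=(-1)^{n}\bigl(\prod_{i=1}^{n}a_i\bigr)z^{n}$, since each of the $n$ partial numerators carries a factor of $z$, not $(-1)^{n-1}(\cdots)z^{n-1}$; moreover the comparison of $A_k$ with $B_k$ lives at level $k+1$, so the relevant power is $z^{k+1}$. Your parity conclusion comes out right only because these two slips cancel, and the bookkeeping should be redone cleanly. Finally, the positivity $Q_k(z)>0$ on $(-1,+\infty)$ does not follow ``inductively from $a_nz>-1$'' when $-1<z<0$: there the term $a_nzQ_{n-2}$ is negative and the recurrence alone does not preclude $Q_n$ from vanishing; ruling out real poles of the convergents in $(-1,0)$ again requires the value-region information (or the positivity of the representing measure). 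Once the tail lemma is supplied, the rest of your outline does assemble into a correct proof.
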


Using the above formulas we write below  the rational {\it a priori} bounds for the $g$--fraction  \eqref{f4}  corresponding to  $k=1,2,3$:

\noindent Case $k=1$.

\begin{equation}  \label{rrr}
A_1(z)=\frac{1}{1+g_1z},\quad B_1(z)=\frac{1+(1-g_1)z}{1+z}\,.
\end{equation}

\noindent Case $k=2$.

\begin{equation} \label{bounds2}
A_2^+(z)=\frac{(1-g_1g_2)z+1}{(1+z)(g_1(1-g_2)z+1)}, \quad B_2^+(z)=\frac{g_2(1-g_1)z+1}{(g_1-g_1g_2+g_2)z+1}\,,
\end{equation}
\begin{equation}  \label{bounds22}
 A_2^-=B_2^+, B_2^-=A_2^+\,.
\end{equation}

\noindent Case $k=3$.

\begin{equation}
A_3(z)=\frac{(g_3+g_2-g_3g_2-g_2g_1)z+1}{g_1g_3(1-g_2)z^2+(g_3+g_2+g_1-g_3g_2-g_1g_2)z+1}\,.
\end{equation}

\begin{equation}
B_3(z)=\frac{g_2(1-g_3)(1-g_1)z ^2+(1+g_2-g_3g_2-g_1g_2)z+1}{(1+z)((g_1+g_2-g_3g_2-g_1g_2)z+1)}\,.
\end{equation}

The coefficients $g_p$  in formula \eqref{main}   are defined by 
\begin{equation} \label{54}
g_p=R_p(\phi(0),\phi'(0),\dots,\phi^{(p)}(0)),  \quad p\geq 1\,,
\end{equation}
with rational expressions  $R_p$ which can be  found  by calculation of derivatives of both sides of \eqref{main} and evaluating them at $z=0$. The  recurrent formulas for  $R_p$ can be derived from \cite{W}, p. 203.

In view of \eqref{connection},  $\phi_n=\phi^{(n)}(0)$ are  functions of  derivatives $f^{(n)}(0)$:
\begin{equation} \label{phin}
\phi_n=\frac{\alpha^{-n}}{M} f^{(n)}(0), \quad \alpha=K/T, \quad  n\geq 0\,.
\end{equation}

Below we give explicit expressions for $\mu_0$, $g_1$, $g_2$ in terms of $\phi_n$, $n=0,1,2$:

\begin{equation}
\mu_0=\frac{1+\phi_0}{1-\phi_0}\,,
\end{equation}

\begin{equation} \label{g1}
g_1=\frac{1}{2} \frac{1-\phi_0^2-2\phi_1}{1-\phi_0^2}\,,
\end{equation}

\begin{equation} \label{g2}
g_2=\frac{1}{2} \frac{(4\phi_1^2-2\phi_2-\phi_0+\phi_0^2+\phi_0^3-2\phi_2\phi_0-1)(1-\phi_0)   }{(2\phi_1-\phi_0^2+1)(2\phi_1+\phi_0^2-1)}\,.
\end{equation}

\begin{corollary} \label{rem}
As seen from \eqref{g1},  $f'(0)>0$  is equivalent  to   $g_1<1/2$; $f'(0)< 0$ is equivalent to   $g_1>1/2$ and  $f'(0)=0 \Leftrightarrow g_1=1/2$.
\end{corollary}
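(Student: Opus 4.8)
The plan is to reduce the whole claim to a single sign comparison. First I would simplify the expression \eqref{g1} for $g_1$ by subtracting $1/2$; a short computation gives
\begin{equation}
g_1-\frac{1}{2}=\frac{1}{2}\cdot\frac{1-\phi_0^2-2\phi_1}{1-\phi_0^2}-\frac{1}{2}=-\frac{\phi_1}{1-\phi_0^2}.
\end{equation}
This isolates the dependence of $g_1-1/2$ on just the first two Taylor coefficients $\phi_0=\phi(0)$ and $\phi_1=\phi'(0)$, and eliminates the apparent complexity of \eqref{g1}.

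Next I would fix the sign of the denominator. Since $\phi$ is holomorphic in $R_{K,\alpha B}$ and satisfies $|\phi(z)|<1$ there (as noted after \eqref{connection}), in particular $|\phi_0|=|\phi(0)|<1$, whence $1-\phi_0^2>0$. Consequently the sign of $g_1-1/2$ is exactly opposite to the sign of $\phi_1$, and the two vanish simultaneously.

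Finally I would transport this back to $f$. By \eqref{phin} we have $\phi_1=\alpha^{-1}M^{-1}f'(0)$ with $\alpha=K/T>0$ and $M>0$, so $\phi_1$ and $f'(0)$ carry the same sign (and vanish together). Combining the two observations yields $g_1<1/2\Leftrightarrow\phi_1>0\Leftrightarrow f'(0)>0$, $g_1>1/2\Leftrightarrow f'(0)<0$, and $g_1=1/2\Leftrightarrow f'(0)=0$, which is precisely the assertion.

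There is no genuine obstacle here: the argument is a direct sign inspection once \eqref{g1} is simplified. The only point requiring any care is the strict positivity of $1-\phi_0^2$, which rests on the \emph{a priori} bound $|\phi|<1$; with that in hand the three equivalences follow immediately.
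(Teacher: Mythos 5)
Your proposal is correct and is exactly the computation the paper leaves implicit behind the phrase ``as seen from \eqref{g1}'': rewriting $g_1-\tfrac12=-\phi_1/(1-\phi_0^2)$, noting $1-\phi_0^2>0$ from $|\phi(0)|<1$ (with $\phi_0$ real), and using \eqref{phin} to match the sign of $\phi_1$ with that of $f'(0)$. Nothing further is needed.
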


\section{ Bounds on the time of the first return }

Applying Theorem \ref{T1} to the $g$--fraction in  \eqref{r} one can derive  the {\it a priori} bounds for $f(z)$  holding inside the  interval $(-T,T)$.  Increasing the truncation order $n$ in \eqref{truncated1}  one obtains  more and 
 more precise information of qualitative character about $f(z)$  once the derivatives of $f(z)$ at $z=0$ are known.
In particular,  if $f(z)$ is   a solution of a system of  analytic  differential equations,  one can find often  recurrent formulas to calculate derivatives $f^{(n)}(0)$ of all orders $n\geq 0$ and write the $g$--fraction representation \eqref{r}.

\begin{figure}[h] 
\includegraphics[scale=0.500,angle=0]{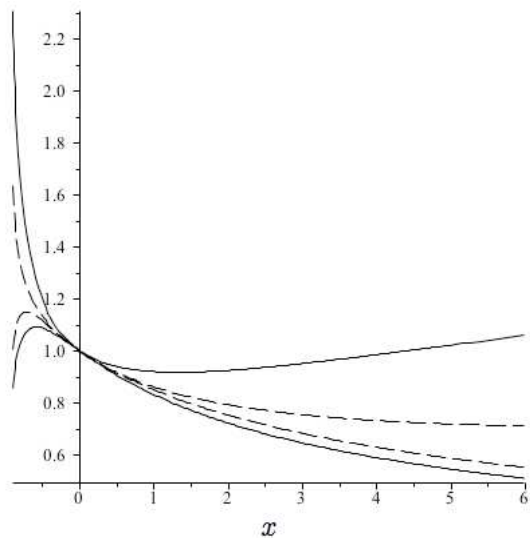} \caption{\label{fig2} Bounds  $ r(x) A_1(x) $,  $r(x)B_1(x)$ (bold line)   and  $r(x)A_2^{\pm}(x)$, $r(x)B_2^{\pm}(x)$ (dashed  line) for $x\in (-0.9,6)$,  $g_1=0.7$, $g_2=0.3$, $r(x)=\sqrt{1+x}$. }
\end{figure}

Our aim is  to estimate  the time of  return of $f(z)$   to the initial value $f(0)$ i.e to study the points $z_0\in (-T,T)$, $z_0\neq 0$ such that $f(z_0)=f(0)$.   To do this  we will  use the  {\it a priori} bounds  \eqref{rrr} applied to the $g$--fraction in formula  \eqref{main}. For $p=2k+1$ one obtains:

\begin{equation} \label{main1}
\left(      1- \displaystyle \frac{2}{\mu_0  \sqrt{1+\eta(z)}  A_p(\eta(z)) +1 }       \right)  \leq \phi(z) \leq \left(      1- \displaystyle \frac{2}{\mu_0  \sqrt{1+\eta(z)}  B_p(\eta(z)) +1 }       \right)\,,
\end{equation}
for $z\in (-K,K)$. 

If $p=2k$ then 
\begin{equation} \label{main2}
\left(      1- \displaystyle \frac{2}{\mu_0  \sqrt{1+\eta(z)}  A_p^+(\eta(z)) +1 }       \right)  \leq \phi(z) \leq \left(      1- \displaystyle \frac{2}{\mu_0  \sqrt{1+\eta(z)}  B_p^+(\eta(z)) +1 }       \right)\,,
\end{equation}
for $z\in (0,K)$, and

\begin{equation} \label{main21}
\left(      1- \displaystyle \frac{2}{\mu_0  \sqrt{1+\eta(z)}  A_p^-(\eta(z)) +1 }       \right)  \leq \phi(z) \leq \left(      1- \displaystyle \frac{2}{\mu_0  \sqrt{1+\eta(z)}  B_p^-(\eta(z)) +1 }       \right)\,,  
\end{equation}
for $z\in (-K,0]$.

\begin{definition}
We denote by $\mathbb A^{(k)}_{M,T,B}\subset \mathbb A_{M,T,B}$, $k=1,2,...$ the set of functions for which the $g$--fraction representation \eqref{r} satisfies the condition
\begin{equation}
g_i\not \in \{0,1 \}, \quad \forall \,  i=1,\dots,k\,.
\end{equation} 
\end{definition}

In the next theorem,  for a given $f\in \mathbb A^{(1)}_{M,T,B}$, we will  describe a neighborhood of origin in which $z=0$ is the only solution of $f(z)=f(0)$.

\begin{theorem} \label{theorem1}
Let $f(z)\in \mathbb A^{(1)}_{M,T,B}$, $f'(0)\neq 0$  where $g_1$ is defined by \eqref{nome}, \eqref{f2}, \eqref{phin} and  \eqref{g1} as a function of  $M,T,B$, $f(0)$, $f'(0)$.  
Let  $\tau \in (-T,T)$, $\tau\neq 0$  be the point such that $f(\tau)=f(0)$.  We define
\begin{equation} 
\tilde \tau=\frac{T}{K} \, |  \mathrm{sn}^{-1}  \left(g,k  \right)   | = \frac{T}{K} \, \left |  \int _0^g \, \frac{dt}{\sqrt{1-t^2} \, \sqrt{1-k^2t^2}  } \right |  \,,
\end{equation}
where
\begin{equation}
 g=\frac{1-2g_1}{g_1^2+(1-g_1)^2}\in(-1,1)\,,
\end{equation}
and $k$, $K$ are given by \eqref{kkk} and  \eqref{f2}.

Then   $ 0<\tilde \tau <T$ and 
\begin{equation} 
| \tau | \geq \tilde \tau \,.
\end{equation}
\end{theorem}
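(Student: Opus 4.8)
The plan is to transfer the problem to the normalized function $\phi$ and reduce the statement to a one-sided separation estimate. With $\alpha=K/T$, relation \eqref{connection} shows that $f(\tau)=f(0)$ is equivalent to $\phi(\alpha\tau)=\phi(0)$, and since $\eta$ is an increasing bijection of $(-K,K)$ onto $(-1,+\infty)$ with $\eta(0)=0$, it suffices to prove that $\phi(s)\neq\phi(0)$ for every $s$ with $0<|s|<s^{*}$, where $s^{*}:=|\mathrm{sn}^{-1}(g,k)|\in(0,K)$; the conclusion $|\tau|\ge\tilde\tau$ then follows upon multiplying by $T/K$. I first record that the continued $g$-fraction equals $1$ at the origin, so by \eqref{main} and the expression for $\mu_0$ one has $\phi(0)=1-\tfrac{2}{\mu_0+1}=\phi_0$, and that both a priori bounds in \eqref{main1} (taken with $p=1$) collapse to $\phi_0$ at $s=0$, because $\eta(0)=0$ and $A_1(0)=B_1(0)=1$.

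Next I would exploit the rational bounds \eqref{rrr}. Denote by $\psi_A$ and $\psi_B$ the lower and upper functions appearing in \eqref{main1} for $p=1$. Solving $\psi_A(\eta(z))=\phi_0$ reduces, after cancelling $\mu_0$, to $\sqrt{1+w}=1+g_1 w$ with $w=\eta(z)$, whence $w_A=(1-2g_1)/g_1^{2}$; likewise $\psi_B(\eta(z))=\phi_0$ reduces to $1+(1-g_1)w=\sqrt{1+w}$, giving $w_B=-(1-2g_1)/(1-g_1)^{2}$. Inverting $\eta$ through $\mathrm{sn}(z,k)=w/(w+2)$ turns these into $\mathrm{sn}=g$ and $\mathrm{sn}=-g$ respectively, where $g=(1-2g_1)/(g_1^{2}+(1-g_1)^{2})$ is exactly the quantity in the statement; since $\mathrm{sn}$ is odd, the two corresponding values of $z$ are $\pm s^{*}$.

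It remains to fix signs, which is where Corollary \ref{rem} enters: $f'(0)\neq0$ forces $g_1\neq\tfrac12$, and the sign of $f'(0)$ — equivalently, by \eqref{g1}, the sign of $1-2g_1$ and hence of $g$ — decides which endpoint lies on which side. In the case $g_1<\tfrac12$ (so $g>0$, $w_A>0$, $w_B<0$) I would show, by squaring the two radical inequalities (legitimate because $1+g_1w>0$ and $1+(1-g_1)w>0$ on the ranges in question), that $\sqrt{1+w}>1+g_1w$ holds for $0<w<w_A$ while $1+(1-g_1)w<\sqrt{1+w}$ holds for $w_B<w<0$. Translating back through $\eta$, this yields $\phi(s)\ge\psi_A(\eta(s))>\phi_0$ on $(0,s^{*})$ and $\phi(s)\le\psi_B(\eta(s))<\phi_0$ on $(-s^{*},0)$, so indeed $\phi(s)\neq\phi_0$ for $0<|s|<s^{*}$. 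The case $g_1>\tfrac12$ is identical with the roles of the two bounds interchanged. Finally $0<\tilde\tau<T$ follows from $g\in(-1,1)\setminus\{0\}$, which reduces to the elementary inequality $|1-2g_1|<g_1^{2}+(1-g_1)^{2}$ valid for all $g_1\in(0,1)$.

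The main obstacle I anticipate is the sign bookkeeping in this last step: one must verify that the radical inequalities change sign exactly at $w_A$ and $w_B$, and that $1+g_1w$ and $1+(1-g_1)w$ remain positive on the whole interval $(w_B,w_A)$, so that squaring is reversible and each a priori bound is applied on the correct side of $\phi_0$. The elliptic machinery enters only through the monotone change of variables $\eta$ and the oddness of $\mathrm{sn}$, so no deeper facts about the theta functions are needed beyond those already used in the Lemma.
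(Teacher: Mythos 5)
Your proposal is correct and follows essentially the same route as the paper: apply the $p=1$ a priori bounds \eqref{main1}, locate the nonzero solutions $t_1=(1-2g_1)/g_1^{2}$ and $t_2=-(1-2g_1)/(1-g_1)^{2}$ of $\sqrt{1+t}\,A_1(t)=1$ and $\sqrt{1+t}\,B_1(t)=1$, and pull them back through $\eta^{-1}$ (equivalently $\mathrm{sn}=\pm g$) to get $\tilde\tau$. You actually supply more detail than the paper does on the sign analysis and the reversibility of the squaring step, which the paper compresses into ``follows directly from \eqref{main1} and \eqref{equations}.''
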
  

\begin{proof}
One considers   \eqref{main1} with $p=1$.  We have  $A_1(0)=B_1(0)=1$  and define  $t_{1}$,  $t_2$  as non-zero  solutions  of the following algebraic equations
\begin{equation} \label{equations}
\sqrt{1+t_1}A_1(t_1)=1, \quad \sqrt{1+t_2}B_1(t_2)=1, \quad t_1,t_2\in (-1,+\infty)\,.
\end{equation}

Simple algebraic calculations show that the only  solutions satisfying \eqref{equations}  are  given by
\begin{equation} 
  t_1=\frac{1-2g_1}{g_1^2}, \quad t_2=\frac{2g_1-1}{(1-g_1)^2}\,,
\end{equation}
which are related by
\begin{equation}
\frac{1}{t_1}+\frac{1}{t_2}=-1\,.
\end{equation}
Since $\eta(z)$ is a bijection between the intervals $(-K,K)$ and $ (-1,+\infty)$ there exist unique real numbers  $T_1$, $T_2\in (-K,K)$ satisfying the following equations:
\begin{equation}
 \eta(T_{1})=t_{1}, \quad \eta(T_{2})=t_{2}\,.
\end{equation}
As easily seen from  \eqref{eta}: $T_1=-T_2$ and 
\begin{equation} \label{elliptic}
\eta^{-1}(y)= \int _0^{  \frac{y}{2+y}     } \, \frac{dt}{\sqrt{1-t^2} \, \sqrt{1-k^2t^2}  } , \quad y\in (-1,1)\,.
\end{equation}

  We define
\begin{equation}
\tilde \tau=\frac{T }{K}  | T_1 | =\frac{T }{K}  | \eta^{-1}(t_1) |     \in (0,T)\,.
\end{equation}
The proof  of Theorem \ref{theorem1}   follows  therefore  directly from \eqref{main1} and  \eqref{equations}.
\end{proof}

The next result shows that   $f(z)\in  \mathbb A^{(2)}_{M,T,B}$, under some conditions on derivatives $f^{(p)}(0)$, $p=0,1,2$,  always returns to the initial value $f(0)$  in the interval $(-T,T)$  i.e admits the oscillatory property.
\begin{theorem} \label{return}
Let $f(z)\in  \mathbb A^{(2)}_{M,T,B}$,  $f'(0)\neq 0$  where  $g_1,g_2$ are defined by formulas  \eqref{nome}, \eqref{f2}, \eqref{phin}  and   \eqref{g1}, \eqref{g2}.  We assume that one of the two following conditions (A) or (B)  holds
\begin{equation}
 g_1<1/2, \quad g_1^2-4(1-g_1)^2(1-g_2)g_2\geq 0 \quad (A)
 \end{equation}
 \begin{equation}
  g_1>1/2, \quad (1-g_1)^2-4g_1^2g_2(1-g_2)\geq 0  \quad (B)
\end{equation}
Then there exists $\tau\in (-T,T)$, $\tau \neq 0$  such that 
\begin{equation} \label{IN1}
 f(\tau)=f(0)\,.
\end{equation}
\end{theorem}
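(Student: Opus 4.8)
The plan is to mirror the reduction used in the proof of Theorem \ref{theorem1}. First I would observe that, because the maps $m$ and $\eta$ are strictly monotone, the equation $\phi(z)=\phi(0)$ is equivalent to $h(\eta(z))=1$, where $h(y)=\sqrt{1+y}\,g(y)$ and $g(y)=\{g_1,g_2,\dots\mid y\}$; indeed $g(0)=1$ forces $h(0)=1$, corresponding to the trivial return $z=0$. Since $\eta$ is a bijection of $(-K,K)$ onto $(-1,+\infty)$ and $\tau=z/\alpha$, it suffices to produce a nonzero root $y_\ast\in(-1,+\infty)$ of $h(y)=1$. Expanding the continued fraction to first order gives $g'(0)=-g_1$, hence $h'(0)=\tfrac12-g_1$, which by Corollary \ref{rem} is positive exactly in case (A) ($f'(0)>0$, $g_1<1/2$) and negative exactly in case (B) ($f'(0)<0$, $g_1>1/2$). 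Thus $y=0$ is a simple zero of $h(y)-1$, and near it $h-1$ has a definite sign on each side.

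Next I would invoke the $p=2$ bounds of Theorem \ref{T1}(b) to sandwich $h$. Set $H_A(y)=\sqrt{1+y}\,A_2^+(y)$ and $H_B(y)=\sqrt{1+y}\,B_2^+(y)$ with $A_2^+,B_2^+$ as in \eqref{bounds2}. A direct computation shows $H_A(0)=H_B(0)=1$ and $H_A'(0)=H_B'(0)=\tfrac12-g_1=h'(0)$, so both bounds are tangent to $h$ at the origin. The key algebraic step is to clear the square root: squaring $H_B(y)=1$ and factoring out the known root $y=0$ yields a quadratic $Q_B(y)=a^2y^2+(2g_2(1-g_1)^2-g_1^2)y+(1-2g_1)$ with $a=g_2(1-g_1)$, whose discriminant equals $g_1^2\big(g_1^2-4(1-g_1)^2g_2(1-g_2)\big)$; symmetrically, squaring $H_A(y)=1$ produces a quadratic $Q_A$ with discriminant $(1-g_1)^2\big((1-g_1)^2-4g_1^2g_2(1-g_2)\big)$. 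Hence the second inequality in (A) (resp. in (B)) is \emph{precisely} the statement that $Q_B$ (resp. $Q_A$) has a real root.

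With a real root of the appropriate quadratic in hand, I would close the argument by the intermediate value theorem. In case (A) the product of the roots of $Q_B$ is $(1-2g_1)/a^2>0$, so the roots share a sign; using the boundary values $Q_B(0)=1-2g_1>0$ and $Q_B(-1)=(1-g_1)^2(1-g_2)^2>0$ together with (A) one locates the nonzero root $y_\ast$ either in $(0,+\infty)$ (when the roots are positive) or in $(-1,0)$ (when they are negative). In the first situation $H_B$ is the \emph{upper} bound of $h$ and attains the value $1$ at $y_\ast$, so $h(y_\ast)\le1$ while $h>1$ just to the right of $0$; in the second situation $H_B$ is the \emph{lower} bound on $(-1,0)$ and attains $1$ at $y_\ast$, so $h(y_\ast)\ge1$ while $h<1$ just to the left of $0$. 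Either way $h-1$ changes sign and the intermediate value theorem gives a nonzero $y_\ast$ with $h(y_\ast)=1$; setting $\tau=\eta^{-1}(y_\ast)/\alpha\in(-T,T)$ proves \eqref{IN1}. Case (B) is handled identically with $H_A$, $Q_A$ and the opposite sign of $h'(0)$.

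The main obstacle is the bookkeeping hidden in the last paragraph: after squaring one must discard the spurious branch $\sqrt{1+y}\,B_2^+(y)=-1$ by checking that $B_2^+$ (resp. $A_2^+$) stays positive at $y_\ast$, and one must verify that the relevant root actually lies in $(-1,+\infty)$ rather than below $-1$. Both are controlled by the identities $Q_B(-1)=(1-g_1)^2(1-g_2)^2$ and $Q_A(-1)=g_1^2g_2^2$ and by the observation that the factor $4$ (rather than $2$) in the discriminant inequalities of (A), (B) forces the vertex of the relevant parabola to lie to the right of $-1$, so that the negative root, when it occurs, cannot escape the interval. Making these sign determinations airtight, uniformly over all admissible $g_1,g_2\in(0,1)$, is the delicate part of the argument.
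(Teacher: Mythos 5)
Your argument is correct and follows the paper's own proof in all essentials: both apply the $k=2$ bounds of Theorem \ref{T1}(b), identify hypotheses (A), (B) with the reality of the roots of the quadratic obtained from $\sqrt{1+y}\,B_2^+(y)=1$ (resp.\ $\sqrt{1+y}\,A_2^+(y)=1$), determine the sign of that root, and conclude from the monotonicity of $f$ at the origin together with the intermediate value theorem. The only differences are cosmetic: the paper substitutes $x=-1+t^2$ and works with quadratics $P_1,P_2$ in $t$ (locating the roots via $P_i(1)=\pm(2g_1-1)$ and Vieta), whereas you square directly in $y$ and control the root's position through $Q(0)$, $Q(-1)$ and the vertex location --- and your closing observations (the identities for $Q_B(-1)$, $Q_A(-1)$ and the fact that the factor $4$ in $D_1,D_2$ forces the vertex to the right of $-1$) do check out.
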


\begin{proof}
We  consider  \eqref{main2}  with   $p=2$  and define  the following real algebraic equations
\begin{align*}
 \sqrt{1+x}\,A_2^+(x)=1\,, \quad x \in (0,+\infty)\,, \quad \mathrm{(A_1)}\,,\\
 \sqrt{1+x}\,B_2^+(x)=1\,, \quad x \in (0,+\infty)\,,  \quad \mathrm{(B_1)}\,, \\
 \sqrt{1+x}\,B_2^-(x)=1\,, \quad x \in (-1,0)\,,  \quad \mathrm{(\tilde A_1)}\,, \\
 \sqrt{1+x}\,A_2^-(x)=1\,, \quad x \in (-1,0)\,.\quad \mathrm{(\tilde B_1)}\,,
\end{align*}
where $A_2^-=B_2^+$, $B_2^-=A_2^+$.

Making  the change of variables 
\begin{equation} \label{changef}
x=-1+t^2, \quad t\in \mathbb R\,,
\end{equation}
after some elementary  transformations, it is easy to show that  equations  ($A_1$), ($B_1$) are  equivalent respectively to quadratic equations ($A_2$) and    ($B_2$) given below
\begin{align*}
  P_1(t)=g_1(1-g_2)t^2-(1-g_1)t+g_1g_2=0,   \quad t\in \mathbb R,    \quad \mathrm{(A_2)}\\
P_2(t)=g_2(1-g_1)t^2-g_1t+(1-g_1)(1-g_2)=0,\quad t\in \mathbb R\,. \quad \mathrm{(B_2)}
\end{align*}
\begin{remark}
We notice that $P_2(t)$ is obtained  from $P_1(t)$  by transformation
\begin{equation}
g_i\mapsto 1-g_i, \quad  i=1,2\,.
\end{equation}
\end{remark}

The  polynomial $P_1(t)$ has  two real  roots   $t_1^{(1)},t_2^{(1)}\in \mathbb R$
\begin{equation} \label{t1}
t_1^{(1)}=\frac{1-g_1-\sqrt{D_1}}{2g_1(1-g_2)}, \quad t_2^{(1)}=\frac{1-g_1+\sqrt{D_1}}{2g_1(1-g_2)}, \quad t_1^{(1)}\leq t_2^{(1)}\,,
\end{equation}
if and only if the following condition holds 
\begin{equation} \label{D1}
D_1=(1-g_1)^2-4g_1^2g_2(1-g_2)\geq 0\,.
\end{equation}
$P_2(t)=0$ has  two real  solutions   $t_1^{(2)},t_2^{(2)}\in \mathbb R$
\begin{equation} \label{t2}
t_1^{(2)}=\frac{g_1-\sqrt{D_2}}{2(1-g_1)g_2}, \quad t_2^{(2)}=\frac{g_1+\sqrt{D_2}}{2(1-g_1)g_2}, \quad t_1^{(2)}\leq t_2^{(2)}\,,
\end{equation}
if and only if 
 \begin{equation} \label{D2}
D_2=g_1^2-4(1-g_1)^2(1-g_2)g_2\geq  0\,.
\end{equation}
Applying the  Vieta's formulas to polynomials $A_2$ and $B_2$, and taking into account that $g_i\in (0,1)$, $i=1,2$  one checks that:
\begin{equation} \label{VI}
t^{(i)}_j> 0, \quad  i,j=1,2\,.
\end{equation}

\noindent  {\it Case A.} Let $f'(0)>0 ( \Leftrightarrow g_1<1/2$). Then $f(z)$ is increasing function in the interval $(-\epsilon,\epsilon)$ for some small $\epsilon>0$. We assume that inequality  $D_2 \geq 0$ holds, so both roots $t^{(2)}_1$ and $t^{(2)}_2$ are real. One has $P_2(1)=1-2g_1>0$, so, in view of \eqref{VI},  either $0<t^{(2)}_1\leq t^{(2)}_2<1$ (a) or  $1<t^{(2)}_1\leq t^{(2)}_2$ (b).  One verifies with help of  \eqref{t2} that (a) is equivalent to  $L_2=g_1-2(1-g_1)g_2<0$ and (b) to  $L_2>0$.
Thus,  in view of \eqref{changef},  if (b) holds, the equation ($B_1$) will have  solution   $x=-1+{t^{(2)}_1}^2 \in (0, +\infty)$ and if (a) holds,  ($\tilde B_1$) will have solution  $x=-1+{t^{(2)}_2}^2 \in (-1, 0)$.\\

\noindent  {\it Case B.} Let $f'(0)<0 ( \Leftrightarrow g_1>1/2$). Then $f(z)$ is decreasing function in the interval $(-\epsilon,\epsilon)$ for some small $\epsilon>0$. We assume that inequality  $D_1 \geq 0$ holds, 
so both roots $t^{(1)}_1$ and $t^{(1)}_2$ are real. One has $P_1(1)=2g_1-1>0$, so, in view of \eqref{VI},  either $0<t^{(1)}_1\leq t^{(1)}_2<1$ (c) or  $1<t^{(1)}_1\leq t^{(1)}_2$ (d).  One verifies with help of \eqref{t1} that (c) is equivalent to  $L_1=1-g_1-2g_1(1-g_2)<0$ and (d) to  $L_2=1-g_1-2g_1(1-g_2)>0$.
Thus,  in view of \eqref{changef},  if (d) holds, the equation ($A_1$) will have  solution   $x=-1+{t^{(1)}_1}^2 \in (0, +\infty)$ and if (c) holds,  ($\tilde A_1$) will have  solution  $x=-1+{t^{(1)}_2}^2 \in (-1, 0)$ in view of \eqref{changef}.

As in proof of Theorem \ref{theorem1},  since $\eta(z)$ is a bijection of $(-K,K)$ to $(-1,+\infty)$, there exists unique real number  $\tilde T\in (-K,K)$
 satisfying equation $\eta(\tilde T)=x$ with $x\in(-1,+\infty)$ defined  above.

Let
\begin{equation}
\zeta=\frac{T \tilde T}{K}\in(-T,T)\,.
\end{equation}
Then, as follows from  \eqref{main2}, \eqref{main21},  there exists  $\tau$ satisfying  \eqref{IN1}.  One has   $\tau\in (0,\zeta)$ if $\zeta>0$ and  $\tau\in (\zeta,0)$ if  $\zeta<0$.   That finishes the proof.
\end{proof}

The next corollary contains explicit formulas and precises  the intervals containing the point of return $\tau$ defined in Theorem \ref{return}. 

\begin{corollary} We assume all conditions of Theorem \ref{return} being satisfied and  define four following subsets of $(0,1)^2$ (see Fig. \ref{fig1}):
\begin{align}
E=\{ (g_1,g_2)\in (0,1)^2 \,: \, D_2 \geq 0,  0< g_1<1/2,0<g_2<1/2 \}\,, \label{k1} \\
F=\{ (g_1,g_2)\in (0,1)^2 \,: \, D_2 \geq 0,  0< g_1<1/2,  1/2<g_2<1   \}\,, \label{k2} \\
G=\{ (g_1,g_2)\in (0,1)^2 \,: \, D_1 \geq 0,  1/2< g_1<1,0<g_2<1/2 \}\,, \label{k3} \\
H=\{ (g_1,g_2)\in (0,1)^2 \,: \, D_1 \geq 0,  1/2< g_1<1, 1/2<g_2<1 \} \,. \label{k4} 
\end{align}

Let $t^{(i)}_j$, $i,j=1,2$ be defined by \eqref{t1},  \eqref{t2} and $\eta^{-1}(z)$ by \eqref{elliptic}, then \\
\begin{align}
\zeta= \frac{T}{K} \eta^{-1}(-1+{t^{(2)}_1}^2)\in (0,T) \quad  \mathrm{if}  \quad  (g_1,g_2)\in E \\
\zeta= \frac{T}{K} \eta^{-1}(-1+{t^{(2)}_2}^2)\in   (-T,0)  \quad  \mathrm{if}  \quad  (g_1,g_2)\in F \\
\zeta= \frac{T}{K} \eta^{-1}(-1+{t^{(1)}_2}^2)\in (-T,0)  \quad  \mathrm{if}  \quad  (g_1,g_2)\in G \\
\zeta= \frac{T}{K} \eta^{-1}(-1+{t^{(1)}_1}^2)\in (0,T)  \quad  \mathrm{if}  \quad  (g_1,g_2)\in H 
\end{align}
For $\tau$ defined by \eqref{IN1} one has
\begin{equation}
\tau\in (0,\zeta) \quad  \mathrm{if}  \quad \zeta>0 \quad  \mathrm{and}   \quad \tau\in (\zeta,0) \quad  \mathrm{if}   \quad \zeta<0\,.   
\end{equation}
\end{corollary}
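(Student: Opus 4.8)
The plan is to recognize that this Corollary is a bookkeeping refinement of the proof of Theorem \ref{return}: each of the four regions $E,F,G,H$ defined in \eqref{k1}--\eqref{k4} is to be matched with exactly one of the four sub-cases (a)--(d) appearing there, after which the explicit formula for $\zeta$ and the localization of $\tau$ are simply read off. Since the localization $\tau\in(0,\zeta)$ or $\tau\in(\zeta,0)$ was already established at the end of that proof, the only genuine task is to decide, in each region, the sign of the auxiliary quantity $L_2=g_1-2(1-g_1)g_2$ (Case A, $g_1<1/2$) or $L_1=1-g_1-2g_1(1-g_2)$ (Case B, $g_1>1/2$): this sign dictates which root of \eqref{t1}--\eqref{t2} furnishes the solution and hence the sign of $\zeta$.

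First I would dispose of $F$ and $G$, where the sign is unconditional. On $F$ one has $g_1<1/2$ and $g_2>1/2$, so $2(1-g_1)g_2>(1-g_1)>g_1$ and therefore $L_2<0$; this is sub-case (a), giving $x=-1+{t^{(2)}_2}^2\in(-1,0)$ from the larger root $t^{(2)}_2$. On $G$ one has $g_1>1/2$ and $g_2<1/2$, so $2g_1(1-g_2)>g_1>(1-g_1)$ and therefore $L_1<0$; this is sub-case (c), giving $x=-1+{t^{(1)}_2}^2\in(-1,0)$ from the larger root $t^{(1)}_2$.

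The main point, which I expect to be the only delicate step, is the treatment of $E$ and $H$, where the discriminant hypothesis is indispensable (the inequality $L_2>0$ fails, for instance, when $g_1$ is very small). On $E$ we have $g_2<1/2$, hence $g_2(1-g_2)>g_2^2$; combining this with $D_2\geq 0$ from \eqref{D2} yields
\begin{equation}
g_1^2\geq 4(1-g_1)^2g_2(1-g_2)>4(1-g_1)^2g_2^2\,,
\end{equation}
so that $g_1>2(1-g_1)g_2$ upon taking positive square roots, i.e. $L_2>0$. This is sub-case (b), producing $x=-1+{t^{(2)}_1}^2\in(0,+\infty)$ from the smaller root $t^{(2)}_1$. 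The region $H$ is handled identically after applying the involution $g_i\mapsto 1-g_i$ recorded in the Remark relating $P_1$ and $P_2$, under which $D_2\leftrightarrow D_1$, $L_2\leftrightarrow L_1$, Case A exchanges with Case B, and the regions transform as $E\leftrightarrow H$, $F\leftrightarrow G$; one thus obtains $L_1>0$ on $H$, i.e. sub-case (d), with $x=-1+{t^{(1)}_1}^2\in(0,+\infty)$.

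Finally I would assemble the formulas. In each region the selected value of $x$ has been identified above; substituting it into $\zeta=\frac{T}{K}\eta^{-1}(x)$ and using the explicit integral \eqref{elliptic} gives the four displayed expressions for $\zeta$. The sign of $\zeta$ then reads off from the sign of $x$, since $\eta^{-1}$ is increasing with $\eta^{-1}(0)=0$: the values $x>0$ arising on $E$ and $H$ give $\zeta\in(0,T)$, while the values $x\in(-1,0)$ arising on $F$ and $G$ give $\zeta\in(-T,0)$, the bound $|\zeta|<T$ following from $\tilde T\in(-K,K)$. The stated enclosures $\tau\in(0,\zeta)$ for $\zeta>0$ and $\tau\in(\zeta,0)$ for $\zeta<0$ are precisely the conclusion already reached in the proof of Theorem \ref{return}, so no further work is needed.
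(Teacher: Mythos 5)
Your proposal is correct and follows exactly the route the paper intends: the paper states this corollary without proof as a direct readout of the case analysis (a)--(d) in the proof of Theorem \ref{return}, and your sign determinations of $L_1$ and $L_2$ on $E$, $F$, $G$, $H$ (in particular the use of $D_2\geq 0$ together with $g_2<1/2$ on $E$, and the involution $g_i\mapsto 1-g_i$ for $H$) correctly supply the only missing step.
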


One easy verifies using the above formulas  that  $\zeta(g_1,g_2)$ $\to$ $0$ as  $g_1$ $\to$ $1/2$.

\begin{figure}[h]
\includegraphics[scale=0.500,angle=0]{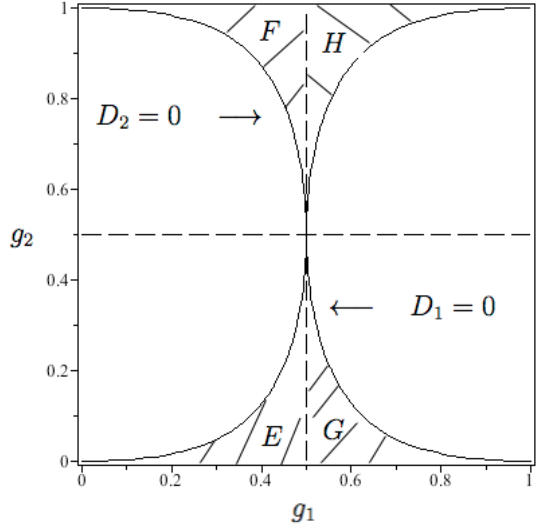} \caption{\label{fig1} Four domains  $E$, $F$, $G$, $H$ in the parameter  space $(g_1,g_2)\in (0,1)^2$ corresponding to the oscillatory behavior of $f(z)\in \mathbb A_{M,T,B }$.}
\end{figure}

\section{Functions bounded in the complex strip}

We denote   $\mathbb A_{M,\infty,B}$   the set  of functions  $f(z)$ satisfying the following conditions

\noindent 1. $f(z)$  is holomorphic in the infinite strip $S_B=\{ z\in \mathbb C\, : | Im(z) | <B \}$ , $B>0$.

\noindent 2.  $f(\mathbb R)\subset \mathbb R$.

\noindent 3.  $| f(z) | < M$, $M>0$,   $\forall $  $z\in S_B$.

\begin{figure}[h]
\includegraphics[scale=0.400,angle=0]{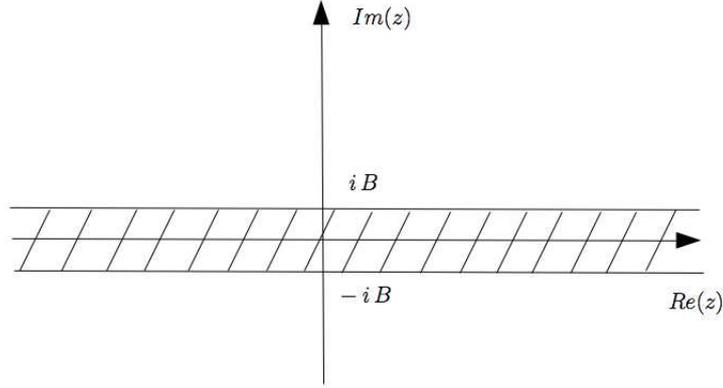} \caption{\label{fig4}  Domain  of analiticity  $S_{B}$   of $f(z)\in \mathbb A_{M,\infty,B}.$}
\end{figure}

The next result gives  a characterization of functions bounded in absolute value  in $S_B$
 with the help of $g$--fractions and can be considered as the limit case of  \eqref{r}  as  $T  \to +\infty$.  

\begin{theorem} \label{mainTh}
Let $f(z) \in \mathbb A_{M,\infty,B}$. Then   for  some $\mu_0>0$  and $g_k \in [0,1]$ , $k \geq 1$  one has
\begin{equation} \label{maingstrip}
f(z)=M \left(  1-\displaystyle \frac{2}{\mu_0 \exp\left(\frac{\pi z}{2B}\right)    \{ g_1,g_2,...| \exp \left( \frac{\pi z}{B} \right)-1  \} +1 }     \right).
\end{equation}
\end{theorem}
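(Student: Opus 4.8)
The plan is to repeat verbatim the construction of Section 1, replacing the elliptic conformal map $\Phi$ of Lemma 1.1 by an exponential one adapted to the strip. Concretely, I would first prove the strip analogue of that lemma: the map
\begin{equation*}
\Phi_\infty(z) = \exp\!\left(\frac{\pi z}{B}\right) - 1
\end{equation*}
is a conformal bijection of $S_B$ onto $\mathbb{H}$ with $\Phi_\infty(0)=0$ and $\Phi_\infty(\mathbb{R})=(-1,+\infty)$. This is immediate: $z \mapsto \pi z/B$ sends $S_B$ onto the strip $\{\,|\mathrm{Im}\,w| < \pi\,\}$, the exponential sends the latter conformally onto the slit plane $\mathbb{C}\setminus(-\infty,0]$, and the translation by $-1$ turns this into $\mathbb{C}\setminus(-\infty,-1]=\mathbb{H}$; tracking the real axis gives $\mathbb{R}\mapsto(0,+\infty)\mapsto(-1,+\infty)$. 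Note also that $1+\Phi_\infty(z)=\exp(\pi z/B)$, so $\sqrt{1+\Phi_\infty(z)}=\exp(\pi z/(2B))$ is the single-valued branch on $\mathbb{H}$.

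With this map in hand, the remainder follows exactly as in \eqref{f1}--\eqref{r}. Since $m\colon \mathbb{D}_M \to \mathbb{H}_+$ of \eqref{mapm} is conformal and $f\in\mathbb{A}_{M,\infty,B}$ maps $S_B$ into $\mathbb{D}_M$ and $\mathbb{R}$ into $(-M,M)$, the composition
\begin{equation*}
F = m \circ f \circ \Phi_\infty^{-1}
\end{equation*}
is holomorphic in $\mathbb{H}$, maps $\mathbb{H}$ into $\mathbb{H}_+$, and sends $(-1,+\infty)$ into $(0,+\infty)$ --- precisely the hypotheses under which Wall's theorem \cite{W} yields $\mu_0>0$ and $g_k\in[0,1]$ with $F(z)=\mu_0\sqrt{1+z}\,\{g_1,g_2,\dots\,|\,z\}$ on $\mathbb{H}$. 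Inverting $m$ via $m^{-1}(v)=M(1-2/(v+1))$ and using $f=m^{-1}\circ F\circ\Phi_\infty$ gives
\begin{equation*}
f(z)=M\left(1-\frac{2}{\mu_0\sqrt{1+\Phi_\infty(z)}\,\{g_1,g_2,\dots\,|\,\Phi_\infty(z)\}+1}\right),
\end{equation*}
and substituting $\Phi_\infty(z)=\exp(\pi z/B)-1$ together with $\sqrt{1+\Phi_\infty(z)}=\exp(\pi z/(2B))$ produces exactly \eqref{maingstrip}.

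Finally, I would record the consistency with the announced limit $T\to+\infty$ as a check rather than as a proof ingredient: as $T\to+\infty$ with $B$ fixed, the nome $q=e^{-\pi B/T}\to 1$, hence $k\to 1$, and by the modular relation $q=e^{-\pi K'/K}$ together with $K'\to\pi/2$ one gets $\alpha=K/T\to\pi/(2B)$. Using $\mathrm{sn}(u,1)=\tanh u$, a short computation gives $\Phi(z)\to 2\tanh(\tfrac{\pi z}{2B})/(1-\tanh(\tfrac{\pi z}{2B}))=\exp(\pi z/B)-1=\Phi_\infty(z)$, so \eqref{r} degenerates into \eqref{maingstrip}. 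The only point requiring genuine care is the verification that $F$ satisfies the hypotheses of Wall's theorem on the whole slit plane $\mathbb{H}$; this, however, is forced by the global boundedness of $f$ on $S_B$ and its reality on $\mathbb{R}$, exactly as in the rectangular case, so no new difficulty arises.
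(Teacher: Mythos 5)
Your proposal is correct and follows essentially the same route as the paper: your $\Phi_\infty$ is exactly the inverse of the paper's conformal map $l(z)=\frac{B}{\pi}\log(1+z)$, and the rest (compose with $m$, apply Wall's theorem on $\mathbb H$, invert) matches the paper's argument, the only cosmetic difference being that the paper first writes $F$ as the integral $\mu_0\sqrt{1+z}\int_0^1\frac{d\mu(u)}{1+zu}$ and then converts to the $g$--fraction, while you invoke the $g$--fraction form of Wall's theorem directly. The $T\to+\infty$ degeneration you describe is, as you say, a consistency check and not needed for the proof.
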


\begin{proof}
Let    $m \,: \mathbb D_M \to \mathbb H_+$ be defined by \eqref{mapm}.  We define the conformal map 
\begin{equation} \label{hhh}
 l(z)=\displaystyle \frac {B}{\pi}\log(1+z), \quad  l\,:   \mathbb H \to S_B\,.
\end{equation}

One verifies that the composition $F=m \circ f \circ l $ is holomorphic in $\mathbb H$ and $F(\mathbb H)\subset \mathbb H_+$
with $F(z)\in \mathbb  R$ for $z>-1$.
Thus, according to theorem of Wall \cite{W}, p. 279 $F$ can be written as follows
\begin{equation}
F(z)=\mu_0 \sqrt{1+z} \int_0^1 \displaystyle \frac{d \mu(u)}{1+zu}\,,
\end{equation}
for some nondecreasing real bounded function $\mu(u)$, $u\in (0,1)$ and $\mu_0>0$.

For $f=m^{-1}\circ F\circ l^{-1}$ one obtains the following formula
\begin{equation} \label{g12}
f(z)=M \left(  1-\displaystyle \frac{2}{\mu_0 \exp\left(\frac{\pi z}{2B}\right) \int_0^1\frac{d\mu(u) }{1+
(\exp \left( \frac{\pi z}{B} \right)-1)u}+1}     \right).
\end{equation}
The integral in \eqref{g12} can be transformed to the continued  $g$--fraction form  \cite{W}
\begin{equation}
   \int^1_0 \displaystyle \frac{d\mu (u)}{1+ (\exp \left( \frac{\pi z}{B} \right)-1)u}       
   =  \left  \{g_1,g_2,... | \exp \left( \frac{\pi z}{B} \right)-1 \right\}, \quad \mathrm{for \, \, some} \quad g_k\in[0,1]\,,
\end{equation}
that together with \eqref{g12} implies  \eqref{maingstrip}. The proof is finished.
\end{proof}

Let
\begin{equation}
\theta(z)=\frac{1}{M} f( Bz/ \pi )\,. 
\end{equation}

To calculate the coefficients $g_p$ in \eqref{maingstrip}  one has  formulas similar  to   \eqref{54}:
\begin{equation}
g_p=C_p(\theta(0),\theta'(0),..., \theta^{(p)}(0) ), \quad p\geq 1\,,
\end{equation}
with rational functions $C_p$  determined by calculation of derivatives  of both sides of \eqref{maingstrip} at $z=0$.   

Introducing
\begin{equation} \label{thetan}
 \theta_n=\theta^{(n)}(0)=\frac{1}{M}\, \frac{B^n}{\pi^n} f^{(n)}(0), \quad n \geq 0\,,
\end{equation}
 we provide below explicit formulas for $\mu_0$, $g_1$, $g_2$: 
\begin{equation}
\mu_0 =\frac{1+\theta_0} {1-\theta_0}\,,
\end{equation}

\begin{equation} \label{g1strip}
g_1= \frac{1}{2} \, \frac{ 1-4\theta_1-\theta_0^2} { 1-\theta_0^2  }   \,,
\end{equation}

\begin{equation} \label{g2strip}
g_2=  \frac{1}{2}\frac{(16\theta_1^2-8\theta_2-\theta_0+\theta_0^2+\theta_0^3-8\theta_2\theta_0-1)(1-\theta_0)} {(1-\theta_0^2+4\theta_1)(4\theta_1-1+\theta_0^2)}  \,.
\end{equation}

\begin{definition}
We denote by $\mathbb A^{(k)}_{M,\infty,B}\subset \mathbb A_{M,\infty,B}$, $k=1,2,...$ the set of functions for which the $g$--fraction representation \eqref{r} satisfies the condition
\begin{equation}
g_i\not \in \{0,1 \}, \quad \forall \,  i=1,\dots,k\,.
\end{equation} 
\end{definition}
The next result is analogous  to Theorem \ref{theorem1} and is proved in the similar way.

\begin{theorem} \label{theorem1strip}
Let $f(z)\in \mathbb A^{(1)}_{M,\infty,B}$, $f'(0)\neq 0$  where $g_1$ is defined by \eqref{thetan} and  \eqref{g1strip} as function of  $M,B$, $f(0)$, $f'(0)$.  
Let  $\tau \in  \mathbb R$, $\tau\neq 0$  be the point such that $f(\tau)=f(0)$.  We define
\begin{equation}
\tilde \tau=\frac{2B}{\pi} \left  |   \log \left( \frac{1-g_1}{g_1}\right) \right  |>0   \,.
\end{equation}
Then   
\begin{equation} 
| \tau | \geq \tilde \tau \,.
\end{equation}
 \end{theorem}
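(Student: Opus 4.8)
The plan is to follow the proof of Theorem \ref{theorem1} line by line, replacing the elliptic substitution $\eta$ by its degenerate strip counterpart. Comparing \eqref{main} with \eqref{maingstrip}, the role played by $\eta(z)$ in the rectangular case is now taken by the exponential map
\begin{equation*}
\zeta(z)=\exp\left(\frac{\pi z}{B}\right)-1,
\end{equation*}
which is a real--analytic bijection of $\mathbb R$ onto $(-1,+\infty)$ with $\zeta(0)=0$, satisfies $\sqrt{1+\zeta(z)}=\exp(\pi z/(2B))$, and whose inverse is $\zeta^{-1}=l$ from \eqref{hhh}, that is $\zeta^{-1}(x)=\frac{B}{\pi}\log(1+x)$.

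First I would apply Theorem \ref{T1} in the case $k=1$ to the $g$--fraction appearing in \eqref{maingstrip}. Writing $h(x)=\sqrt{1+x}\,g(x)$ with $g(x)=\{g_1,g_2,\dots\,|\,x\}$, and noting that $h(0)=1$, the return condition $f(\tau)=f(0)$ is, since $\mu_0>0$, equivalent to $h(\zeta(\tau))=1$. The bounds $A_1(x)\le g(x)\le B_1(x)$, valid for all $x\in(-1,+\infty)$, then sandwich $h$ between $\sqrt{1+x}\,A_1(x)$ and $\sqrt{1+x}\,B_1(x)$, exactly as in \eqref{main1}.

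Next I would locate the non--trivial solutions $t_1,t_2$ of $\sqrt{1+t}\,A_1(t)=1$ and $\sqrt{1+t}\,B_1(t)=1$. These are literally the equations \eqref{equations}, whose only non--zero roots are $t_1=(1-2g_1)/g_1^2$ and $t_2=(2g_1-1)/(1-g_1)^2$. A direct computation gives $1+t_1=(1-g_1)^2/g_1^2$, hence
\begin{equation*}
\zeta^{-1}(t_1)=\frac{B}{\pi}\log\frac{(1-g_1)^2}{g_1^2}=\frac{2B}{\pi}\log\frac{1-g_1}{g_1},
\end{equation*}
and, since $(1+t_1)(1+t_2)=1$, symmetrically $\zeta^{-1}(t_2)=-\zeta^{-1}(t_1)$ (this multiplicative relation is the strip analogue of $1/t_1+1/t_2=-1$ and again encodes $T_1=-T_2$). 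Setting $\tilde\tau=|\zeta^{-1}(t_1)|$ reproduces the stated expression, and since $g_1\in(0,1)$ for $f\in\mathbb A^{(1)}_{M,\infty,B}$ it is strictly positive.

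The conclusion $|\tau|\ge\tilde\tau$ then follows exactly as before: on the $\zeta$--interval $(t_2,t_1)$ the two sandwiching functions keep $h$ strictly on one side of the value $1$, so no non--trivial return can occur before $\zeta(z)$ leaves $(t_2,t_1)$, i.e. before $|z|$ reaches $\tilde\tau$. The one point requiring care, and the main obstacle, is precisely this last sign bookkeeping, which the terse proof of Theorem \ref{theorem1} leaves implicit. I would make it explicit by checking that both bounds share the common slope $(1-2g_1)/2$ at the origin, whose sign is governed by whether $g_1$ is smaller or larger than $1/2$, that is by the sign of $f'(0)$ through Corollary \ref{rem}, and that $t_1,t_2$ are the first subsequent crossings of the level $1$; together these guarantee that $h$ cannot equal $1$ on $(t_2,t_1)\setminus\{0\}$, which is exactly the hypothesis $f'(0)\neq 0$ at work.
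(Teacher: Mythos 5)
Your proof is correct and is essentially the paper's intended argument: the paper gives no separate proof of this theorem, stating only that it "is proved in the similar way" as Theorem \ref{theorem1}, and your adaptation — replacing $\eta$ by $z\mapsto e^{\pi z/B}-1$ with inverse $l$, solving \eqref{equations} for $t_1,t_2$, and using $(1+t_1)(1+t_2)=1$ to get the symmetric exclusion interval — is exactly that translation. Your added sign bookkeeping (common slope $\tfrac12-g_1$ of both bounds at the origin) makes explicit a step the paper leaves implicit, and is sound.
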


The following theorem is equivalent to Theorem \ref{return} for functions $f \in \mathbb A^{(2)}_{M,\infty,B}$ and has the  similar proof.

\begin{theorem} \label{2strip}
Let $f(z)\in \mathbb A^{(2)}_{M,\infty,B}$, $f'(0)\neq 0$ where   $g_1,g_2$ are defined by formulas  \eqref{thetan}  and   \eqref{g1strip}, \eqref{g2strip}.  We assume that  the point $(g_1,g_2)\in (0,1)^2$  belongs to one of the four regions $E$, $F$, $G$, $H$ defined by   \eqref{k1}-\eqref{k4}.

Let
\begin{align}
\zeta= \frac{2B}{\pi} \log(t^{(2)}_1) >0 \quad  \mathrm{if}  \quad  (g_1,g_2)\in E \\
\zeta= \frac{2B}{\pi} \log(t^{(2)}_2)<0  \quad  \mathrm{if}  \quad  (g_1,g_2)\in F \\
\zeta= \frac{2B}{\pi} \log(t^{(1)}_2)<0   \quad  \mathrm{if}  \quad  (g_1,g_2)\in G \\
\zeta= \frac{2B}{\pi} \log(t^{(1)}_1)>0  \quad  \mathrm{if}  \quad  (g_1,g_2)\in H 
\end{align}
where $t^{(i)}_j$, $i,j=1,2$ are defined as functions of $g_1,g_2$ by \eqref{t1} and  \eqref{t2}.

Then there exists $\tau\in \mathbb R^*$  such that 
\begin{equation} 
 f(\tau)=f(0)\,.
\end{equation}
and  $\tau\in (0,\zeta)$ if $\zeta>0$ and  $\tau\in (\zeta,0)$ if  $\zeta<0$.   

\end{theorem}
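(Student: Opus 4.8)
The plan is to follow the proof of Theorem \ref{return} line by line, replacing the elliptic bijection $\eta$ by the exponential change of variable built into the strip representation \eqref{maingstrip}. First I would pass to the rescaled function $\theta(z)=\frac{1}{M}f(Bz/\pi)$, for which \eqref{maingstrip} becomes
\begin{equation*}
\theta(z)=1-\frac{2}{\mu_0\,e^{z/2}\,\{g_1,g_2,\dots\,|\,e^{z}-1\}+1}\,,
\end{equation*}
and set $w=e^{z}-1$, so that $\sqrt{1+w}=e^{z/2}$ and $w$ is a bijection of $\mathbb R$ onto $(-1,+\infty)$ with inverse $z=\log(1+w)$. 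Applying part b) of Theorem \ref{T1} with $p=2$ to the $g$--fraction in the variable $w$ then produces the two-sided bounds for $\theta$ exactly as in \eqref{main2}, \eqref{main21}, now valid for all $z\in\mathbb R$, the sign of $w$ matching the sign of $z$.

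The algebraic heart of the argument is then literally unchanged, because the equations $\sqrt{1+x}\,A_2^{\pm}(x)=1$ and $\sqrt{1+x}\,B_2^{\pm}(x)=1$ do not see the conformal map. Under the same substitution $x=-1+t^2$ they reduce to the identical quadratics $P_1(t)$, $P_2(t)$ with the same roots \eqref{t1}, \eqref{t2}, the same discriminant conditions \eqref{D1}, \eqref{D2}, and the same Vieta positivity \eqref{VI}. Hence the splitting of parameter space into $E,F,G,H$ and the selection of the relevant root $t^{(i)}_j$ in each region carry over verbatim from the corollary to Theorem \ref{return}.

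The only genuinely new computation is the pullback through the new bijection. Where the rectangle case gave $\zeta=\frac{T}{K}\eta^{-1}(-1+{t^{(i)}_j}^2)$, here solving $e^{z}-1=-1+{t^{(i)}_j}^2$, i.e. $e^{z}={t^{(i)}_j}^2$, yields $z=2\log t^{(i)}_j$, and after undoing the rescaling $z\mapsto Bz/\pi$ one obtains
\begin{equation*}
\zeta=\frac{2B}{\pi}\log t^{(i)}_j\,,
\end{equation*}
precisely the value stated in each region. By \eqref{VI} the root is positive, so the logarithm is real; moreover $x>0\Leftrightarrow t^{(i)}_j>1\Leftrightarrow\zeta>0$ and $x\in(-1,0)\Leftrightarrow t^{(i)}_j\in(0,1)\Leftrightarrow\zeta<0$, which reproduces the claimed signs region by region. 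The intermediate value theorem applied to the appropriate a priori bound on the interval between $0$ and $\zeta$ then furnishes $\tau$ with $f(\tau)=f(0)$ and $\tau\in(0,\zeta)$ or $\tau\in(\zeta,0)$ according to the sign of $\zeta$, exactly as in Theorem \ref{return}.

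The main, and essentially the only, point requiring care is the sign bookkeeping: one must confirm that in each of $E,F,G,H$ the monotonicity-and-location analysis (cases (a)--(d) in the proof of Theorem \ref{return}) selects exactly the root index recorded in the statement and delivers the correct sign of $\zeta$. Because the governing quadratics and their root configuration are identical to the rectangle case, this verification is immediate once the pullback formula $\zeta=\frac{2B}{\pi}\log t^{(i)}_j$ is in hand.
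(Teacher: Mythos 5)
Your proposal is correct and takes essentially the same route as the paper: the paper gives no separate proof of Theorem \ref{2strip}, stating only that it ``has the similar proof'' to Theorem \ref{return}, and your adaptation --- keeping the quadratics $P_1,P_2$, the roots $t^{(i)}_j$, and the region analysis verbatim while replacing the elliptic bijection $\eta$ by $w=e^{\pi z/B}-1$ so that $w=-1+t^2$ pulls back to $\zeta=\frac{2B}{\pi}\log t^{(i)}_j$ --- is precisely the intended argument.
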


To conclude this section we will consider the case of functions   analytic and bounded  in a semi-infinite strip.
 Let $B>0$, $T>0$. We denote by $ S_{B,T}\subset \mathbb C$ the complex domain which is the interior of the  semi-infinite strip  formed by segments $t+i B$,$t-iB$,  $t\in [-T,+\infty)$ and $ -T+i t$, $t\in [-B,B]$ (see Fig. \ref{figfig}).

\begin{figure}[h]
\includegraphics[scale=0.400,angle=0]{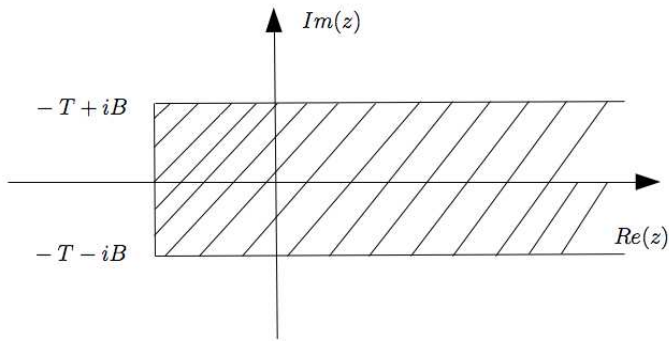} \caption{\label{figfig}  Domain  of analiticity  $S_{T,B}$   of $f(z)\in \mathbb D_{M,T,B}.$}
\end{figure}

Let  $\mathbb B_{M,T,B}$ be the set of functions $f(z)$ satisfying the following conditions:

\noindent 1. $f(z)$ is holomorphic in $ S_{B,T}$.

\noindent 2. $f(\mathbb R)\subset \mathbb R$.

 \noindent 3.  $| f(z) | < M$, $M>0$,  $\forall$  $z\in  S_{B,T}$.
 
It is straighforward to verify  that the following  function
\begin{equation}
L(z)=\frac{ \mathrm{cosh}\left(  \frac{\pi (z+T)}{B}   \right )-\mathrm{cosh}\left(\frac{\pi T}{B} \right) } { \mathrm{cosh}\left( \frac{\pi T}{B} \right )-1 }\,,
\end{equation} 
defines a conformal  map $L \, : S_{B,T} \to \mathbb H$ and maps bijectively $(-T,+\infty)$ to $(-1,+\infty)$, $L(0)=0$.  One can formulate now  the result analogous to  \eqref{r} and  \eqref{maingstrip}:

\begin{theorem}
Let $f(z) \in \mathbb B_{M,T,B}$. Then   for  some $\mu_0>0$  and $g_k \in [0,1]$ , $k \geq 1$  one has
\begin{equation} 
f(z)=M \left(  1-\displaystyle \frac{2}{\mu_0  \,   \sqrt{1+L(z)   }  \,    \{ g_1,g_2,...| L(z)  \} +1 }     \right)\,.
\end{equation}
\end{theorem}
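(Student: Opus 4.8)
The plan is to repeat, almost verbatim, the transport-and-invert argument used for Theorem \ref{mainTh} and for formula \eqref{r}, the only change being that the strip map \eqref{hhh} is replaced by the conformal map $L$ of the semi-infinite strip. Concretely, I would carry $f$ over to the slit-plane domain $\mathbb H$, apply Wall's theorem there, and then pull the resulting $g$--fraction back through the inverse maps.

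First I would take $m : \mathbb D_M \to \mathbb H_+$ as in \eqref{mapm} and form $F = m \circ f \circ L^{-1}$. Since $L^{-1} : \mathbb H \to S_{B,T}$ is conformal, $f : S_{B,T} \to \mathbb D_M$ by hypothesis 3, and $m : \mathbb D_M \to \mathbb H_+$ is conformal, the composite $F$ is holomorphic on $\mathbb H$ and $F(\mathbb H) \subset \mathbb H_+$. For the boundary behaviour I would use that $L$ maps $(-T,+\infty)$ bijectively onto $(-1,+\infty)$ with $L(0)=0$, so $L^{-1}$ sends $(-1,+\infty)$ into the real interval $(-T,+\infty)$, where $f$ is real by hypothesis 2 and satisfies $|f|<M$; hence $m(f)=(M+f)/(M-f)\in(0,+\infty)$ there. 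Thus $F$ is real and positive on $(-1,+\infty)$, which is exactly the class to which Wall's representation applies.

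Next I would invoke the theorem of Wall \cite{W} (p.\ 279): $F$ admits the representation $F(z)=\mu_0\sqrt{1+z}\,\{g_1,g_2,\dots|z\}$ with $\mu_0>0$ and $g_k\in[0,1]$, the $g$--fraction arising from the Stieltjes integral $\int_0^1 d\mu(u)/(1+zu)$ exactly as in the proof of Theorem \ref{mainTh}. Finally, inverting $m$ by $m^{-1}(w)=M(w-1)/(w+1)=M\bigl(1-2/(w+1)\bigr)$ and writing $f=m^{-1}\circ F\circ L$, I would substitute $w=F(L(z))=\mu_0\sqrt{1+L(z)}\,\{g_1,g_2,\dots|L(z)\}$ to recover precisely the asserted formula.

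I do not expect a genuine obstacle, since the argument is a direct transcription of Theorem \ref{mainTh}; the one new ingredient — that $L$ is a conformal bijection $S_{B,T}\to\mathbb H$ taking $(-T,+\infty)$ onto $(-1,+\infty)$ — has already been granted in the text. It follows from the fact that $\cosh(\pi(z+T)/B)$ maps the semi-infinite strip univalently onto $\mathbb C$ slit along a real half-line, after which the affine normalization in the definition of $L$ carries this slit plane onto $\mathbb H$ and fixes $L(-T)=-1$, $L(0)=0$, $L(+\infty)=+\infty$. The only points needing mild care are fixing the principal branch of $\sqrt{1+z}$ on $\mathbb H$ so that the representation is single-valued, and confirming that the hypotheses of Wall's theorem are met exactly as in the strip case; both are immediate once the mapping properties of $F$ are in hand.
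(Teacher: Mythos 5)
Your proposal is correct and is exactly the argument the paper intends: the theorem is stated without proof as ``analogous to \eqref{r} and \eqref{maingstrip}'', and your transport-and-invert scheme $F=m\circ f\circ L^{-1}$, Wall's representation on $\mathbb H$, then $f=m^{-1}\circ F\circ L$, is the same construction used for Theorem \ref{mainTh} with $l$ replaced by $L$.
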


\section{Applications}

We will apply the  results from previous sections  to the Newtonian  three--body problem, whose solutions in many situations are analytic functions in the strip along the real axis of the complex time plane.

We consider three mass points $P_1$, $P_2$, $P_3$ in $\mathbb R^3$ which attract each other according to the Newtonian law with finite positive masses $m_1$, $m_2$, $m_3$. Let $R_i = (x_i,y_i,z_i)$ be the position vector of $P_i$ and $r_{ij}$ the distance between it and mass $j$. One writes equations of motion as follows:
\begin{eqnarray}\label{3b}
m_i \frac{dR_i'}{dz}=-\sum_{j\neq i} m_i m_j\frac{R_i-R_j}{r_{ij}^3},\\
R_i'=\frac{dR_i}{dz}=(x_i',y_i',z_i'), \quad i=1,2,3\,,
\end{eqnarray}
which have the integral of energy:
\begin{eqnarray}
H=T+U=h=-\frac{m_1m_2m_3}{2\Gamma}K, \quad h,K=const\,, \\
T=\sum_{i=1}^3\frac{ m_i(x_i'^2+y_i'^2+z_i'^2)}{2},\\
U=-\frac{m_3m_2}{r_{32}}-\frac{m_1m_3}{r_{13}}-\frac{m_2m_1}{r_{21}},\\
\Gamma=m_1+m_2+m_3\,,
\end{eqnarray}
the first integrals of the impulse of the system:
\begin{eqnarray}
\sum_{i=1}^3m_i x_i=0, \quad   \sum_{i=1}^3m_i z_i=0, \quad  \sum_{i=1}^3m_i z_i=0\,,\\
\sum_{i=1}^3m_i x_i'=0, \quad  \sum_{i=1}^3m_i y_i'=0,  \quad \sum_{i=1}^3m_i z_i'=0\,,
\end{eqnarray}
and the first integrals of the angular momentum:
\begin{equation}
\sum_{i=1}^3m_i(x_iy_i'-x_i'y_i)=c_1,\quad \sum_{i=1}^3m_i(y_iz_i'-y_i'z_i)=c_2, \quad \sum_{i=1}^3m_i(z_ix_i'-z_i'x_i)=c_3\,,
\end{equation}
$c_1$, $c_2$, $c_3=const$.

\begin{figure}[h]
\includegraphics[scale=0.400,angle=0]{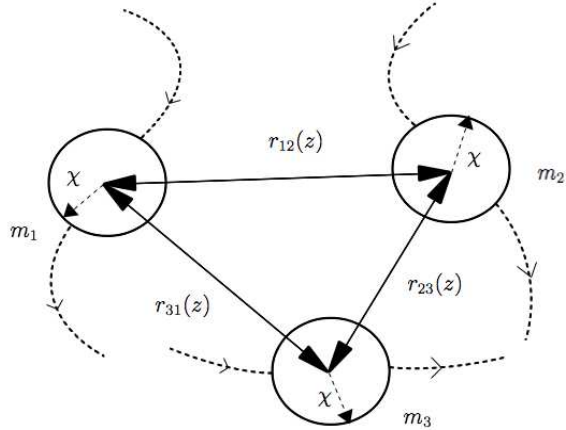} \caption{\label{fig6}    The planar three--planet problem.}
\end{figure}

We shall need the following result  due to Sundman \cite{Sun} which we write in a slightly  different form.
\begin{theorem} \label{Sudm} Let $J\subset \mathbb R$ be a connected open interval and $x_i(z)$, $y_i(z)$, $z_i(z)$, $i=1,2,3$  be a solution of the three-body problem \eqref{3b}  defined for  $z\in J$  and satisfying the following  inequalities
\begin{equation} \label{nc}
r_{32}(z)>2\chi, \quad r_{13}(z)>2\chi, \quad r_{21}(z)>2\chi, \quad \forall \, z\in J\,.
\end{equation}
Then  $\forall \, z_0 \in J$   the positions $R_i(z)$, $i=1,2,3$  are holomorphic  functions   in the complex disk 
\begin{equation}
 \Delta_{\chi}=\{ z\in \mathbb C\, :\,  |z-z_0|<B_{\chi}\}\,,
\end{equation}
where
\begin{equation} \label{BH}
B_{\chi}=\frac{ 1}{ 14  } \frac{2\chi}   {\sqrt{   \frac{28}{21}  \frac{\Gamma^2}{m\chi}  +\Gamma | K| }}, \quad m=\mathrm{min}\{m_1,m_2,m_3 \}\,,
\end{equation}
and satisfy  $\forall \, z\in  \Delta_{\chi} $ the inequalities
\begin{equation}
| x_i(z_0)-x_i(z)|< \chi/ 7, \quad | y_i(z_0)-y_i(z)|< \chi/ 7, \quad | z_i(z_0)-z_i(z)|< \chi/ 7, \quad i=1,2,3\,.
\end{equation}
\end{theorem}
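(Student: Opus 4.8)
The plan is to prove Theorem \ref{Sudm} by the classical method of majorants for analytic ordinary differential equations, organized as a continuity (bootstrap) argument with explicit constants; this is essentially Sundman's estimate. First I would rewrite the second order system \eqref{3b} as a first order autonomous system $w'=G(w)$ for the state $w=(R_1,R_2,R_3,R_1',R_2',R_3')$, with
\[
G(w)=\Bigl(R_1',R_2',R_3',\,-\sum_{j\neq 1}m_j\frac{R_1-R_j}{r_{1j}^3},\dots\Bigr).
\]
The map $G$ is holomorphic off the collision set $\{r_{ij}^2=0\}$, and the data $w(z_0)$ is real with $r_{ij}(z_0)>2\chi$, so we begin well inside the domain of holomorphy. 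The goal is to quantify how far into $\mathbb{C}$ the unique holomorphic solution through $w(z_0)$ persists before any $r_{ij}$ can approach $0$.

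Next I would record the a priori data at the real point $z_0$. From the energy integral $T+U=h$ and the hypothesis $r_{ij}(z_0)>2\chi$ one gets $|U(z_0)|<\frac{1}{2\chi}\sum_{i<j}m_im_j$, hence $\frac{m}{2}|R_i'(z_0)|^2\le T(z_0)=h-U(z_0)$ is controlled by $|h|+\frac{1}{2\chi}\sum_{i<j}m_im_j$; with $\sum_{i<j}m_im_j\le\Gamma^2/3$ and $|h|=\frac{m_1m_2m_3}{2\Gamma}|K|$ this yields an explicit bound $v_0$ on the initial speeds $|R_i'(z_0)|$. Set $V_{\max}^2=\frac{28}{21}\frac{\Gamma^2}{m\chi}+\Gamma|K|$, the quantity under the root in \eqref{BH}, so that $B_\chi=\frac{2\chi}{14\,V_{\max}}$ is exactly the time needed to move each coordinate by $\chi/7$ at speed $V_{\max}$. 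The strategy is to prove $|R_i'(z)|\le V_{\max}$ throughout the disk, from which the coordinate variation bound $\chi/7$ follows by integration.

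The core is a bootstrap on $\Delta_\chi=\{|z-z_0|<B_\chi\}$. Writing $\Delta_\rho=\{|z-z_0|\le\rho\}$, let $S\subset[0,B_\chi)$ be the set of radii $\rho$ for which the solution extends holomorphically to $\Delta_\rho$ with (i) $|x_i(z)-x_i(z_0)|\le\chi/7$ for every coordinate and body, and (ii) $|R_i'(z)|\le V_{\max}$. Then $S$ is nonempty (local existence at $z_0$, where both bounds are strict) and closed; I would show it is open, so $S=[0,B_\chi)$. Assuming (i)--(ii) on $\Delta_\rho$: \emph{(a)} since $r_{ij}(z_0)^2>4\chi^2$ is \emph{real} and each coordinate is perturbed by at most $\chi/7$, the complex quantity $r_{ij}(z)^2=\sum_{\mathrm{coord}}(\cdots)^2$ has modulus bounded below by a fixed multiple of $\chi^2$, so $G$ stays holomorphic and $|R_i''(z)|\le A$ with an explicit $A$ of order $\Gamma/\chi^2$; \emph{(b)} integrating, $|R_i'(z)|\le v_0+A\rho<V_{\max}$ strictly; \emph{(c)} integrating again, $|x_i(z)-x_i(z_0)|\le V_{\max}\rho<V_{\max}B_\chi=\chi/7$ strictly. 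The strict improvement of (i)--(ii) allows continuation past $\Delta_\rho$ with the same bounds, giving openness; hence the solution is holomorphic on all of $\Delta_\chi$ and the bounds $|x_i(z_0)-x_i(z)|<\chi/7$ (and likewise for $y_i,z_i$) hold.

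The main obstacle is making estimates (a)--(c) mutually consistent with the precise constants of \eqref{BH}. Step (a) is the delicate one, because $r_{ij}(z)^2$ is a complex sum of squares, not a modulus, so one must verify that perturbing each coordinate by $\chi/7$ cannot drive $\sum_{\mathrm{coord}}(\cdots)^2$ toward the origin; this uses that $r_{ij}(z_0)$ is real with $r_{ij}(z_0)>2\chi$, and crucially that the bound on $\frac{|R_i-R_j|}{|r_{ij}|^{3}}$ be uniform in the possibly large separation $r_{ij}(z_0)$ — the perturbation is small relative to $r_{ij}(z_0)$, not merely relative to $\chi$. The numerical budget (the factor $1/14$, equivalently the allowance $\chi/7$) is precisely the safety margin that makes $v_0+A B_\chi\le V_{\max}$ and $V_{\max}B_\chi=\chi/7$ hold at once; checking these with the stated constants, using $m=\min_i m_i$, $\sum_{i<j}m_im_j\le\Gamma^2/3$ and $m_1m_2m_3/m\le\Gamma^2/4$, is the routine but careful remainder of the proof.
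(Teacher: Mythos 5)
The paper offers no proof of this theorem to compare against: it is imported verbatim (in ``slightly different form'') from Sundman's 1912 memoir \cite{Sun}, so the only question is whether your reconstruction is sound. Your skeleton --- a continuity/bootstrap argument on growing disks $\Delta_\rho$, with the two inductive bounds (coordinate drift $\le \chi/7$, complex velocity $\le V_{\max}$), closed by $V_{\max}B_\chi=\chi/7$ --- is indeed the standard route to estimates of this type, and the step you single out as delicate is exactly right: since $r_{ij}(z)^2=\sum(\cdots)^2$ is a complex sum of squares, one must show it stays away from $0$ when each coordinate of the real configuration is perturbed by at most $\chi/7$. It is worth noting that this very computation appears in the paper itself, as Lemma \ref{inv}: there the author shows $|r_{ij}(z)^2|\ge R^2-\frac{12}{7}R\chi-\frac{12}{49}\chi^2\ge\bigl(\frac{4\chi}{7}\bigr)^2$ for $R=r_{ij}(z_0)>2\chi$, and since the quadratic in $R$ is increasing for $R>\frac{12}{7}\chi$ one also gets the uniform-in-$R$ bound $|r_{ij}(z)^2|\ge c\,R^2$ with $c$ an absolute constant, which is precisely what you need to control $|R_i-R_j|/|r_{ij}|^3$ by $C\Gamma/\chi^2$ uniformly in the (possibly large) separation. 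So your plan is coherent and your diagnosis of the crux agrees with what the paper actually verifies in its own Lemma.

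The one genuine weakness is that you defer the entire quantitative content of the theorem --- the consistency of the budget $v_0+A B_\chi\le V_{\max}$ with the specific constants $\frac{28}{21}$ and $\frac{1}{14}$ of \eqref{BH} --- to a ``routine but careful remainder.'' This is not automatic: with crude estimates (e.g.\ bounding $|r_{ij}(z)^2|$ only by $(\frac{2R}{7})^2$) the inequality fails to close when $|K|\chi$ is small, since the slack $V_{\max}^2-v_0^2\ge\frac{\Gamma^2}{m\chi}+\frac{3}{4}\Gamma|K|$ must absorb $2V_{\max}AB_\chi=\frac{2}{7}A\chi$, and the needed inequality reduces to a numerical condition on $A\chi^2/\Gamma$ against $\Gamma/m\ge 3$. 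With the sharper Lemma \ref{inv}--type bound ($|r_{ij}(z)^2|\gtrsim 0.44\,R^2$, hence $A\lesssim 1.1\,\Gamma/\chi^2$) the check does go through comfortably, but you should carry it out explicitly rather than assert it: the constants in \eqref{BH} encode Sundman's particular chain of estimates, and a bootstrap skeleton does not inherit them for free. Also make explicit the openness step --- local Cauchy existence with a radius uniform over the closed disk, which follows since $G$ is bounded and holomorphic on a fixed neighborhood of the states permitted by bounds (i)--(ii) --- since this is what lets $S$ be open as well as closed.
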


Such a solution can be interpreted  as a collision free  motion for $z\in J$ of three planets each of radius $\chi>0$ ( see Fig. \ref{fig6}). The collision between bodies $P_i$ and $P_j$  at the moment of time $z\in \mathbb R$ happens if and only if  $r_{ij}(z)=2 \chi$.

\begin{lemma}\label{inv}
Let all conditions of Theorem \ref{Sudm} hold and $z_0 \in J$. Then the inverse mutual distances $r_{ij}^{-1}(z)$  are holomorphic functions  in $ \Delta_{\chi} $ and  bounded in the absolute value: 
\begin{equation} \label{principal}
| r_{ij}(z)^{-1}|<M_{\chi}=\frac{7}{4\chi},\quad  \forall\, z\in   \Delta_{\chi}\,.
\end{equation}
\end{lemma}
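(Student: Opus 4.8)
The plan is to exhibit $r_{ij}^{-1}$ as a holomorphic branch of $(r_{ij}^2)^{-1/2}$, where $r_{ij}^2$ denotes the analytic continuation to $\Delta_\chi$ of the squared distance, and then to bound $|r_{ij}^2|$ away from zero on all of $\Delta_\chi$ using the displacement estimates supplied by Theorem \ref{Sudm}. First I would record that, by Theorem \ref{Sudm}, the coordinate functions $x_i(z),y_i(z),z_i(z)$ are holomorphic in $\Delta_\chi$, so the relative coordinates $u(z)=x_i(z)-x_j(z)$, $v(z)=y_i(z)-y_j(z)$, $w(z)=z_i(z)-z_j(z)$ and the function
\[
r_{ij}^2(z)=u(z)^2+v(z)^2+w(z)^2
\]
are holomorphic in $\Delta_\chi$ and agree on $J$ with the ordinary squared distance. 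Writing $u_0=u(z_0)$, $v_0=v(z_0)$, $w_0=w(z_0)$, these are real with $u_0^2+v_0^2+w_0^2=r_{ij}^2(z_0)>4\chi^2$ by \eqref{nc}, while the displacement bounds give $|u(z)-u_0|\le|x_i(z)-x_i(z_0)|+|x_j(z)-x_j(z_0)|<2\chi/7$, and similarly for $v$ and $w$.

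The main step is a uniform lower bound for $|r_{ij}^2(z)|$ on $\Delta_\chi$. Setting $\delta_u=u(z)-u_0$, $\delta_v=v(z)-v_0$, $\delta_w=w(z)-w_0$, I would expand
\[
r_{ij}^2(z)=r_{ij}^2(z_0)+2(u_0\delta_u+v_0\delta_v+w_0\delta_w)+(\delta_u^2+\delta_v^2+\delta_w^2),
\]
and estimate the two correction terms. By Cauchy--Schwarz the linear term is bounded in absolute value by $\frac{4\chi}{7}(|u_0|+|v_0|+|w_0|)\le\frac{4\sqrt{3}\,\chi}{7}\,r_{ij}(z_0)$, and the quadratic term by $12\chi^2/49$. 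Writing $s=r_{ij}(z_0)>2\chi$ this yields $|r_{ij}^2(z)|\ge s^2-\frac{4\sqrt{3}\,\chi}{7}s-\frac{12\chi^2}{49}$.

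The hard part will be to check that this lower bound stays above $16\chi^2/49=(4\chi/7)^2$, which reduces to the quadratic inequality $s^2-\frac{4\sqrt{3}\,\chi}{7}s-\frac{4\chi^2}{7}>0$. I would verify this by noting that the parabola has its vertex at $s=\frac{2\sqrt{3}\,\chi}{7}<2\chi$ and takes the value $\frac{24-8\sqrt{3}}{7}\chi^2>0$ at $s=2\chi$, so it remains positive for every $s>2\chi$. The only subtlety is that $r_{ij}(z_0)$ admits no upper bound, but this causes no trouble because the linear correction grows only like $s$ whereas the leading term $r_{ij}^2(z_0)=s^2$ grows quadratically, making the estimate uniform in the configuration.

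Finally, since $\Delta_\chi$ is a disk, hence simply connected, and $r_{ij}^2$ is holomorphic and nowhere vanishing on it (indeed $|r_{ij}^2|>16\chi^2/49$), the function $r_{ij}^{-1}(z):=(r_{ij}^2(z))^{-1/2}$ is well defined and holomorphic once one fixes the branch agreeing with the positive value on $J$; this is the sought holomorphic continuation of the inverse distance. The bound then follows at once, since $|r_{ij}^{-1}(z)|=|r_{ij}^2(z)|^{-1/2}<7/(4\chi)=M_\chi$, which is exactly \eqref{principal}.
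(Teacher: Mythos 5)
Your proof is correct and follows essentially the same route as the paper's: expand $r_{ij}^2(z)$ about $z_0$, bound the linear and quadratic correction terms using the displacement estimates of Theorem \ref{Sudm}, and conclude that $|r_{ij}^2(z)|$ stays strictly above $(4\chi/7)^2$ on $\Delta_\chi$. The only differences are cosmetic improvements on your side: your Cauchy--Schwarz bound $\frac{4\sqrt{3}\,\chi}{7}s$ for the cross term is slightly sharper than the paper's $\frac{12\chi}{7}R$ (which is tight exactly at $s=2\chi$), and you make explicit the choice of holomorphic branch of $(r_{ij}^2)^{-1/2}$ on the simply connected disk, a point the paper leaves implicit.
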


\begin{proof}
Let $<\,,\,>$ denotes the Euclidean scalar product in $\mathbb R^3$.  For some fixed $i<j$ we  introduce  $X(z)=(X_1(z),X_2(z),X_3(z))=r_i(z)$, $Y(z)=(Y_1(z),Y_2(z),Y_3(z))=r_j(z)$.
 Let $X(z)=X(z_0)+\tilde X(z)$, $Y(z)=Y(z_0)+ \tilde Y(z)$,  $z \in   \Delta_{\chi}$  where $\tilde X(z)=(\tilde X_1(z),\tilde X_2(z), \tilde X_3(z))$, $ \tilde Y(z)=( \tilde Y_1(z), \tilde Y_2(z), \tilde Y_3(z))$. Then
\begin{equation} \label{azza}
 | \tilde X_i(z) |    <\chi/7,\quad  |\tilde Y_i(z) |<\chi/7, \quad \forall \, z\in   \Delta_{\chi},  \quad i=1,2,3\,,
\end{equation}
as follows from Sundman's Theorem \ref{Sudm}.

Let $E=X(z_0)-Y(z_0)$, $R^2=<E,E>$, $K=\tilde X(z)- \tilde Y(z)$. Then $ |r_{ij}^2(z) |= |<E+K,E+K>|$.
Applying the triangular inequality one obtains for $z\in  \Delta_{\chi}$:
\begin{equation}
 |r_{ij}^2(z) | \geq R^2-2|<E,K>|-|<K,K>|> R^2-12R\frac{\chi}{7}-12\left (\frac{\chi}{7} \right)^2=\left( \frac{4\chi}{7} \right)^2\,,
\end{equation}
where we have used  the inequalities $R^2>4\chi ^2$ and \eqref{azza}.  That implies  \eqref{principal} and finishes the proof.
\end{proof}

We shall consider the case that $J=\mathbb R$ which corresponds   to the collision--free motion of three rigid spherical  bodies, each of radius $\chi>0$,   for $-\infty <z<+\infty$. In this case, according to Sundman's Theorem \ref{Sudm},  all inverse mutual distances
$r_{ij}^{-1}(z)$ are analytic functions in the complex infinite strip $S_{B_{\chi}}$.  As follows from Lemma \ref{inv},  $r_{ij}^{-1}(z)$ are bounded in $S_{B_{\chi}}$  in absolute value by $M_{\chi}$.  Moreover, $r_{ij}^{-1}(z)\in  \mathbb A^{(k)}_{M,\infty,B}$, $k=1,2$.  Indeed, in the opposite case the   $g$--fraction in \eqref{maingstrip} would  be rational one  of the form $\frac{a z+b}{cz +d}$, $a,b,c,d\in \mathbb R$. It is straightforward to verify that this case is eliminated with help of the equations of motion \eqref{3b}. 

 Thus,  the Theorems \ref{theorem1strip} and \ref{2strip} can be applied in this case. Below we  describe one possible  application to the collision problem: \\

{\it 
\noindent Let us consider the motion of  three spherical rigid bodies (planets) in $3$--dimensional Euclidean space  each of radius $\chi>0$ with  masses  $m_1$, $m_2$, $m_3$ interacting according to Newtonian low.  The motion is assumed to be collision free inside some small interval $z\in (-\epsilon,\epsilon)$, $\epsilon>0$. Let $r_{ij}(z)\in\{ r_{32}(z),r_{13}(z),r_{21}(z) \}$ be one of three mutual distances.   We put  $f(z)= r_{ij}^{-1}(z)$. One calculates $f(0), f'(0), f''(0)$ and finds  the upper and lower bounds on the time of the first return of $f(z)$ to its initial value $f(0)$ using Theorems  \ref{theorem1strip} and \ref{2strip}  with $B=B_{\chi}$, $M=M_{\chi}$ defined by \eqref{BH}, \eqref{principal}.  Then, if  these bounds are not fulfilled   for the  observed motion, then  there has to be a  collision between two  of bodies $P_1$, $P_2$, $P_3$  for some negative or positive value of time $z_0\in (-\infty,+\infty)$ i.e $r_{kl}(z_0)=2\chi$ for  some $(kl)\in \{(13),(21),(32) \} $. }

Indeed, let us assume that  the  bounds on the time of the first return given by Theorems  \ref{theorem1strip}, \ref{2strip}  and applied to $f(z)= r_{ij}^{-1}(z)$ with $B=B_{\chi}$, $M=M_{\chi}$ are not fulfilled.    Then  the condition  \eqref{nc} is not satisfied for some $z_0\in (-\infty,+\infty)$   and thus at  least   two  bodies collide  at the the moment of time  $z=z_0$.

\begin{center}
{\bf Acknowledgments}
\end{center}

The authors are grateful to the reviewer's valuable comments that improved the manuscript.


\begin{thebibliography}{99}
\bibitem{A1} M. Abramowitz, I. A. Stegun,   {\it Handbook of mathematical functions}, Dover publications, Inc., New York, 1970.
\bibitem{Ach}  N. I. Akhiezer, H. H. McFaden, {\it Elements of the Theory of Elliptic Functions  }, American Mathematical Society, 1990.
\bibitem{Sun} K.F. Sundman, {\it M\'emoire sur le proble\`eme des trois corps}, Acta math- ematica, 36, p. 105-179, 1912.
\bibitem{T2}  A. Tsygvintsev, {\it On the convergence of continued fractions at Runckel's points}, The Ramanujan Journal, Vol. 15,  No.  3, 2008,407-413.
\bibitem{T1} A. Tsygvintsev, {\it On the connection between $g$--fractions and solutions of the Feigenbaum-Cvitanovic equation, Commun},  {\it Analytic Theory of Continued Fractions}, Vol. XI, 2003, 103-112.
\bibitem{T3} A. Tsygvintsev, B.D. Mestel, A.H. Osbaldestin,  {\it Continued fractions and solutions of the Feigenbaum-Cvitanovic equation}, C.R. Acad. Sci. Paris, t. 334, SŽrie I, 2002, 683-688.
\bibitem{T4}  B.D. Mestel, A.H. Osbaldestin, A. Tsygvintsev, {\it  Bounds on the unstable eigenvalue for the asymmetric renormalization operator for period doubling}, Communications in Mathematical Physics, Volume 250, Number 2,  2004, 241-257.
\bibitem{W} H. S. Wall, {\it Analytic Theory of Continued Fractions}, D. Van Nostrand Company, Inc., New York, N. Y., 1948.
\end{thebibliography}
\end{document}